\documentclass[12pt]{article}

\usepackage[utf8]{inputenc}
\usepackage[T1]{fontenc}
\usepackage[english]{babel}
\usepackage{lmodern}
\usepackage[top=2cm, bottom=2.5cm, left=2cm, right=2cm]{geometry}
\usepackage{mathtools}
\usepackage{graphicx}
\usepackage{enumitem}
\usepackage[thmmarks, amsmath]{ntheorem}
\usepackage{hyperref}
\usepackage{amssymb}
\usepackage{pifont}
\usepackage{mathrsfs}
\usepackage{stmaryrd}
\usepackage{dsfont}
\usepackage{soul}
\usepackage{tabularx}
\usepackage{accents}
\usepackage[table]{xcolor}
\usepackage[framed, numbered]{matlab-prettifier}
\usepackage{changepage}
\usepackage{makecell}
\usepackage[skip=0.75\baselineskip plus 2pt]{parskip}
\usepackage{comment}
\usepackage[autostyle=true]{csquotes}
\usepackage[backend=biber, url=false, doi=false, isbn=false, style=alphabetic, sorting=ynt]{biblatex}
\usepackage{multicol}
\usepackage{pdfoverlay}
\usepackage[skip=0.5\baselineskip]{caption}
\usepackage{indentfirst}
\usepackage{float}
\usepackage{caption}
\usepackage{blindtext}
\usepackage{subcaption}
\usepackage{setspace}
\usepackage{import}
\usepackage{tabularray}
\usepackage[group-separator=\text{,}]{siunitx}
\usepackage{fancyhdr}
\usepackage{authblk}
\usepackage{needspace}

\theoremstyle{plain}
\theoremheaderfont{\upshape\bfseries}
\theorembodyfont{\itshape}
\theoremseparator{.}
\newtheorem{theorem}{Theorem}[section]
\newtheorem{proposition}[theorem]{Proposition}

\newtheorem{lemma}[theorem]{Lemma}
\newtheorem{definition}[theorem]{Definition}
\newtheorem{notation}[theorem]{Notation}

\newtheorem{condition}{Condition}

\theoremstyle{plain}
\theoremheaderfont{\upshape\bfseries}
\theorembodyfont{\itshape}
\theoremseparator{.}

\theoremstyle{nonumberplain}
\theoremheaderfont{\upshape\bfseries}
\theorembodyfont{\upshape}
\theoremseparator{.}
\newtheorem{remark}{Remark}[section]

\newtheorem{examples}{Examples}[section]

\theoremstyle{nonumberplain}
\theoremheaderfont{\itshape}
\theorembodyfont{\upshape}
\theoremsymbol{\ensuremath{\Box}}
\newtheorem{proof}{Proof}[section]

\numberwithin{equation}{section}

\DeclareMathOperator{\e}{e}

\allowdisplaybreaks

\setlist{nosep}

\newcommand{\HRule}{\rule{\linewidth}{0.5mm}}
\title{\makeatletter
\HRule \\[0.4cm]
{ \huge \bfseries Fourier Representations of Spectral Densities in Long-Memory Processes}\\[0.4cm] 
\HRule \\[1.5cm]}

\date{May 19, 2026}
\author{Valentin Vidril\thanks{Email: valentin.vidril@edu.escp.eu}}
\affil{
    CERESSEC, ESSEC Business School \& CY Cergy Paris University\\
    Econophysics Lab, Institut Louis Bachelier\thanks{28 Place de la Bourse, Palais Brongniart, 75002 Paris, France}
}

\begin{document}

\maketitle

\begin{abstract}
In this article, we aim to further clarify certain subtle aspects of processes that exhibit long memory in the second-order sense. We construct a long-memory stochastic sequence, in the sense that the series of absolute autocovariances diverges, whose spectral density has an almost everywhere unboundedly divergent Fourier series. This suggests that the Fourier series of the spectral density of a generic long-range dependent process, one for which nothing is known except that its autocovariances are not absolutely summable, should be handled with great care. On the other hand, it is known that if one assumes regularly varying behavior for the autocovariances or the spectral density, along with suitable conditions on the associated slowly varying function, then the Fourier series of the spectral density converges everywhere, except possibly at 0. The process we construct can easily be simulated, and we compare its empirical and theoretical autocovariances.
\end{abstract}

\newpage

\section{Introduction}

The concept of long memory has been studied for over 60 years, although the non-standard behaviour of long-range dependent processes had been observed for at least two centuries \cite{beran1994,beran2013}. For example, in 1951, Hurst discovered an empirical law during a study of the long-term storage capacity of reservoirs on the Nile \cite{hurst1951,hurst1956}. He examined the \textit{rescaled range} (also called the \textit{$R/S$ statistic}) of the Nile's water flow and found that it grows as a function of the number $n$ of observations, approximately as $n^{0.74}$. It has been shown by Feller that this empirical law is incompatible with the assumption of weak dependence \cite{feller1951}.

Since then, the literature has proposed a number of definitions to formalize this notion of dependence, with primary emphasis on long-range dependence as a second-order feature of a time series, that is, as a property related to the autocovariances or the spectral measure of the process.

In this approach, long memory characterizes the rate at which the autocovariances decay over time. At an intuitive level, a time series is said to exhibit long-range dependence if its autocovariances decay sufficiently slowly as the lag tends to infinity. To formalize this notion of slow second-order decay, several (not necessarily equivalent) definitions have been proposed in the literature. The weakest rigorous second-order definition of long memory requires only that the autocovariances decay slowly enough for the series of their absolute values to diverge. By contrast, short-memory processes have autocovariances that are absolutely summable and often decay exponentially fast.

To characterize the rate of decay more precisely, the literature has settled on a power-law decay. Accordingly, a second-order stationary process is said to exhibit long-range dependence in the time domain if its autocovariances are regularly varying at infinity, that is, if they behave asymptotically like a slowly varying function multiplied by a power law, with exponent greater than $-1$. An analogous definition exists for the spectral density: a time series is long-range dependent in the spectral domain if its spectral density has a singularity at the origin that can be written as the product of a slowly varying function and a power of the frequency (see \cite{binghamGoldie} for a detailed treatment of regular variation). Such power-law decay is closely related to the many scaling laws observed in empirical autocovariance structures across a wide range of domains, such as textiles \cite{cox1951}, agriculture \cite{whittle1956}, economics \cite{granger1966}, finance \cite{granger1995}, and market microstructure \cite{lilloFarmer2004,toth2015}.

Other second-order definitions are based on the rate of decay of the coefficients in the linear representation of the process, or on the growth rate of the variance of its partial sums; see, for example, \cite{pipirasTaqqu2017} (Chapter 2), \cite{giraitisKoulSurgailis} (Chapter 3), \cite{beran2013} (Chapter 1), and \cite{samorodnitsky2016} (Chapter 6). Overall, processes that are long-range dependent in the sense of their second-order properties have been observed in various fields like astronomy, hydrology, sociology, economics, and finance \cite{beran1994,beran2013}.

There has been some discussion regarding the equivalence of the various second-order definitions of long memory in the time and spectral domains. In the literature of the 1990s and early 2000s, many authors treated the time and spectral domain definitions as interchangeable, in the sense that the equivalence was considered ``operationally true,'' that is, valid in cases of practical interest, even though it was often stated in the form of a mathematical theorem \cite{gubner2005}.

These simplifications have since been clarified. It is now known that, in general, the time and spectral domain definitions of long memory are not equivalent, and several counterexamples have been constructed; see, for example, Examples 5.5 and 5.6 in \cite{samorodnitsky2007} and Exercise 2.6 in \cite{pipirasTaqqu2017}.

In \cite{gubner2005}, Gubner shows that the equivalence fails even when the slowly varying functions are asymptotically equal to positive constants. He provides a counterexample of a process that is long-range dependent in the time domain with an asymptotically constant slowly varying function, i.e., whose autocovariances asymptotically follow an exact power law with exponent greater than $-1$, but whose spectral measure is singular. As a result, the process admits no spectral density, and the spectral domain definition of long memory cannot be applied. Gubner also finds a counterexample to the converse: long-range dependence in the spectral domain with a slowly varying function asymptotically equal to a constant does not imply long-range dependence in the time domain with a slowly varying function asymptotically equal to a constant.

To obtain an equivalence between the time domain and the spectral domain, additional constraints must be imposed on the slowly varying functions that appear when expressing the autocovariances (respectively, the spectral density) as the product of a power of the lag (respectively, the frequency) and a slowly varying function. For example, it is enough to require them to belong to the Zygmund class \cite{gubner2005,samorodnitsky2007,beran2013}. In \cite{pipirasTaqqu2017}, Pipiras and Taqqu further relaxed this condition by showing that it is sufficient for the slowly-varying functions to be only quasi-monotone.

In this article,  we consider only second-order stationary time series whose spectral measures are absolutely continuous with respect to the Lebesgue measure. As a consequence, all processes under study admit a spectral density, and the definitions of long memory in the spectral domain will be formulated in terms of properties of this density rather than of the spectral measure.

Our goal is to contribute to a further rigorous treatment of second-order definitions of long memory. We look at the most general second-order definition of long memory, which requires only that the series of the absolute values of the autocovariances diverges. In the literature, one sometimes finds authors defining the spectral density as the sum of the trigonometric series whose coefficients are the autocovariances divided by $2\pi$, even for long-range dependent processes for which nothing is known about the behavior of the autocovariances at infinity, except that they decay slowly enough for their series to diverge. For instance, Section 4.1.3 of \cite{beran2013} presents some known results about spectral theory of---potentially long-range dependent---stationary sequences, while still defining the spectral density in the same section through this trigonometric series representation.

In this article, we show that the Fourier series of the spectral density of a generic long-range dependent process must be manipulated with great care, by providing an example of a long-memory process for which the series of the autocovariances in absolute value diverges, and whose spectral density exists but has an almost everywhere divergent Fourier series. To the best of our knowledge, no such example has been provided in the literature.

This is why, by spectral density, we mean the density of the spectral measure, rather than the trigonometric series whose coefficients are the autocovariances divided by $2\pi$. This distinction is necessary only for long-memory processes. If a process is short-range dependent, its autocovariances are absolutely summable, so that the Fourier series of its spectral density, denoted as $f$, is normally convergent, and the symmetric partial Fourier sums converge uniformly to a continuous function which is equal to $f$ $m$-almost everywhere, where $m$ denotes the Lebesgue measure. Consequently, there is a representative of the equivalence class of $f$ which is continuous on $\mathbb{R}$. Thus, for short-range dependent processes, studying the spectral measure is the same as studying the trigonometric series with autocovariances as coefficients.

For a long-memory process, we can only talk about conditional convergence of the Fourier series since the autocovariances of the process are not absolutely summable. In any case, the counterexample we provide in this article shows that for a long-memory process, the series may diverge at some points and may even diverge almost everywhere.

Notice that some highly efficient estimation techniques, like maximum likelihood estimation using Whittle's approximation, depend on the spectral density being sufficiently regular. For instance, spectral density estimation using the periodogram has been extensively studied for short-range dependent processes, cf \cite{brockwellDavisMethods} (Chapter 10), or \cite{beran2013} (Section 4.6). When the time series is linear and long-range dependent, but its spectral density takes the form $f(\lambda)=L(\lambda){|\lambda|}^{-2d}$ for all $\lambda \in [-\pi,\pi] \setminus \left\{0\right\}$, with $d \in (0,1/2)$ and $L$ continuous and bounded away from 0, some results can still be derived for the standardized discrete Fourier transform (DFT) and periodogram, like for instance their asymptotic distribution (\cite{giraitisKoulSurgailis}, Theorem 5.3.1). Likewise, the proof that the Whittle estimator is asymptotically normal requires similar conditions on the spectral density (\cite{giraitisKoulSurgailis}, Theorem 8.3.1). These restrictions on the spectral density imply that $f$ belongs to $L^{p}$ for some $p > 1$, which in turn guarantees that the Fourier series of $f$ converges almost everywhere to $f$ by the Carleson--Hunt theorem \cite{carleson1966,hunt1967}.

Even though the counterexample we present in this article is based on a constructive proof and can be simulated, it is somewhat exotic. In practical applications, processes most likely satisfy the regularity conditions assumed by the majority of estimation results and theorems.

The remainder of the paper is organized as follows. In Section \ref{section:hardyRogosinski}, we present our main result, which states that there exists a long-memory process whose spectral density has a Fourier series that diverges almost everywhere. To prove this result, we will explicitly construct such a process. Section \ref{section:construction} highlights the details of this construction. Section \ref{section:simulation} is devoted to empirical exploration, where we simulate this process and plot its autocovariances. In Section \ref{section:discussion}, we discuss stronger properties of long memory and the extent to which they guarantee that the spectral density can be legitimately written as a Fourier series. Finally, Appendix \ref{section:technicalProofs} contains the technical proofs.

\section{The Hardy--Rogosinski Process}
\label{section:hardyRogosinski}

We begin by introducing some definitions from ergodic theory, focusing only on the properties relevant to our process. The definitions provided below correspond to equivalent characterizations of general concepts related to ergodic theory, reformulated within the context of time series analysis. For an in-depth treatment of ergodic theory, as well as some equivalent characterizations of ergodicity, mixing, and weak mixing, the interested reader may refer to \cite{karlinTaylor} and \cite{silva2008}.

\Needspace{10\baselineskip}
\begin{definition}
\label{def:ergodicityMixing}
A strictly stationary real-valued stochastic process $(X_{t})_{t \in \mathbb{Z}}$ is said to be
\begin{itemize}[label=\upshape\textbullet, leftmargin=1.5em]
\item ergodic if for all $k \ge 1$ and $A,B \in \mathcal{B}\hspace{-1.6pt}\left(\rule{0pt}{9.1pt}\right.\hspace{-3.2pt}\mathbb{R}^{k}\hspace{-3.2pt}\left.\rule{0pt}{9.1pt}\right)$, the following quantity goes to 0 as $n \to +\infty$: $$\frac{1}{n}\sum_{t=1}^{n}\mathbb{P}((X_{t},\dotsc,X_{t+k-1}) \in A,(X_{1},\dotsc,X_{k}) \in B)-\mathbb{P}((X_{1},\dotsc,X_{k}) \in A)\mathbb{P}((X_{1},\dotsc,X_{k}) \in B),$$
\item weakly mixing if for all $k \ge 1$ and $A,B \in \mathcal{B}\hspace{-1.6pt}\left(\rule{0pt}{9.1pt}\right.\hspace{-3.2pt}\mathbb{R}^{k}\hspace{-3.2pt}\left.\rule{0pt}{9.1pt}\right)$, the following quantity goes to~0~as~$n \to +\infty$: $$\frac{1}{n}\sum_{t=1}^{n}\left|\mathbb{P}((X_{t},\dotsc,X_{t+k-1}) \in A,(X_{1},\dotsc,X_{k}) \in B)-\mathbb{P}((X_{1},\dotsc,X_{k}) \in A)\mathbb{P}((X_{1},\dotsc,X_{k}) \in B)\right|\hspace{-1.7pt},$$
\item mixing if for all $k \ge 1$ and $A,B \in \mathcal{B}\hspace{-1.6pt}\left(\rule{0pt}{9.1pt}\right.\hspace{-3.2pt}\mathbb{R}^{k}\hspace{-3.2pt}\left.\rule{0pt}{9.1pt}\right)$, we have $$\mathbb{P}((X_{t},\dotsc,X_{t+k-1}) \in A,(X_{1},\dotsc,X_{k}) \in B) \xrightarrow[t \to +\infty]{} \mathbb{P}((X_{1},\dotsc,X_{k}) \in A)\mathbb{P}((X_{1},\dotsc,X_{k}) \in B).$$
\end{itemize}
\end{definition}

\begin{remark}
The definition of ergodicity given in Definition \ref{def:ergodicityMixing} is equivalent to requiring that for all $k \ge 1$ and $A \in \mathcal{B}\hspace{-1.6pt}\left(\rule{0pt}{9.1pt}\right.\hspace{-3.2pt}\mathbb{R}^{k}\hspace{-3.2pt}\left.\rule{0pt}{9.1pt}\right)$, we have $$\frac{1}{n}\sum_{t=1}^{n}\mathds{1}_{\left\{(X_{t},\dotsc,X_{t+k-1}) \in A\right\}}  \xrightarrow[n \to +\infty]{\mathbb{P}\text{-a.s.}} \mathbb{P}((X_{1},\dotsc,X_{k}) \in A),$$ see for instance \cite{silva2008}, Chapter 5. In addition, it is not difficult to see that mixing implies weak mixing, which in turn implies ergodicity (see \cite{silva2008}, Lemma 6.2.1).

Moreover, a stationary Gaussian sequence is weakly mixing if and only if it is ergodic. We also have simpler characterizations in the Gaussian case: a stationary Gaussian sequence is weakly mixing if and only if its spectral measure is continuous, i.e., has no atoms, see \cite{cornfeldFomin} (Chapter 14, \S{2}, Theorem 1). Likewise, a stationary Gaussian sequence is mixing if and only if its autocovariances converge to 0 as the lag goes to infinity, see \cite{cornfeldFomin} (Chapter 14, \S{2}, Theorem 2). As a result, a sufficient condition for a stationary Gaussian sequence to be mixing is that its spectral measure is absolutely continuous.
\end{remark}

\begin{remark}
Additionally, the definition of mixing in Definition \ref{def:ergodicityMixing} should not be confused with the concept of \textit{strong} mixing. A strictly stationary process $(X_{t})_{t \in \mathbb{Z}}$ is \textit{strongly} mixing (or $\alpha$-mixing) if $$\alpha(n)=\sup_{A \in \mathcal{F}_{-\infty}^{0},\,B \in \mathcal{F}_{n}^{+\infty}}\left|\mathbb{P}(A \cap B)-\mathbb{P}(A)\mathbb{P}(B)\right|$$ converges to 0 as $n \to +\infty$, where $\mathcal{F}_{-\infty}^{0}=\sigma(X_{t},t \le 0)$ and $\mathcal{F}_{n}^{+\infty}=\sigma(X_{t},t \ge n)$. This notion is stronger than mixing as given in Definition \ref{def:ergodicityMixing}. Indeed, any strongly mixing stationary process is also mixing (see \cite{samorodnitsky2016}, Proposition 2.3.4). Moreover, a nondegenerate strictly stationary Gaussian sequence is strongly mixing if and only if it has an absolutely continuous spectral measure and its spectral density is $\log$-summable with a very specific form (see Theorem 7.1 in \cite{bradley2005}, and \cite{helsonSarason}).
\end{remark}

We now state the main theorem of this article.

\begin{theorem}
\label{thm:existenceProcessWithDivergentFourierSeries}
There exists a long-memory strictly stationary centered real-valued Gaussian process $(X_{t})_{t \in \mathbb{Z}}$ whose spectral measure is absolutely continuous with respect to the Lebesgue measure and whose spectral density is $\log$-summable and has an almost everywhere unboundedly divergent Fourier series.

As a result, this process is also purely nondeterministic and mixing (so it is weakly mixing and ergodic as well), and, denoting $f$ its spectral density, we have the following linear representations: $$X_{t}=\sum_{j=0}^{\infty}a_{j}\varepsilon_{t-j}~\text{for all}~t \in \mathbb{Z},\,\text{and}~f(\lambda)=\frac{\sigma_{0}^{2}}{2\pi}\,{\left|\sum_{j=0}^{\infty}a_{j}\e^{ij\lambda}\right|}^{2}~\text{for all}~\lambda \in \mathbb{R},$$ where $(a_{j})_{j \in \mathbb{N}}$ is a square-summable sequence with $a_{0}=1$, and $(\varepsilon_{t})_{t \in \mathbb{Z}}$ are i.i.d. Gaussian variables with mean zero and variance $\sigma_{0}^{2} > 0$.
\end{theorem}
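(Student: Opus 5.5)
The plan is to start from Kolmogorov's classical construction of an integrable function with an almost everywhere unboundedly divergent Fourier series. Presumably this is in the Hardy--Rogosinski form the section title alludes to, where the partial sums diverge unboundedly almost everywhere. We then massage that function into a valid spectral density.

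Let $g \in L^{1}([-\pi,\pi])$ be such a function. Replacing $g(\lambda)$ by $(g(\lambda)+g(-\lambda))/2$ keeps divergence only if the construction is compatible with symmetry, so it is cleaner to start directly from a real even function. The real part of Kolmogorov's function, made even by using a cosine series construction, should work, since the Kolmogorov examples can be taken as cosine-type series. Then set $f = |g| + 1$, or more carefully $f = g + h$ with $h$ chosen so that $f \ge 1$.

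The difficulty is that taking $|g|$ destroys the divergence. So instead set $f = g + 2\pi^{-1}\,(\text{something nice})$ where positivity is obtained differently. I would write $g = g^{+} - g^{-}$ and put $f = g + 2g^{-} + 1 = |g| + 1$. The divergence of the Fourier series of $f$ then fails only if the Fourier series of $|g|$ diverges in a way that cancels that of $g$. To avoid this, choose $g \ge 0$ from the start. Kolmogorov's construction produces the function as a sum $\sum_k \Phi_{n_k}/\|\cdot\|$ of nonnegative Fejér-type kernel combinations, so $g \ge 0$ is natural. Then set $f = g + 1$. This $f$ is even, integrable, bounded below by $1$, and its Fourier series diverges unboundedly a.e. Moreover $\log f \ge 0$ and $\log f \le f$, so $\log f$ is integrable: log-summability holds.

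Next, define $\gamma(h) = \int_{-\pi}^{\pi} e^{ih\lambda} f(\lambda)\,d\lambda$. This is real and symmetric since $f$ is even, and positive semidefinite since $f \ge 0$. Take the centered Gaussian process with these autocovariances; by Kolmogorov's extension it exists and is strictly stationary, with absolutely continuous spectral measure $f\,dm$.

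Long memory follows by contradiction. If $\sum|\gamma(h)|<\infty$, the Fourier series of $f$ would converge uniformly, contradicting a.e. divergence.

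For the consequences, the Remark above gives mixing, hence weak mixing and ergodicity, from absolute continuity. Kolmogorov--Szegő with $\int \log f > -\infty$ and absolutely continuous spectral measure gives pure nondeterminism. Wold's decomposition then gives $X_t=\sum a_j\varepsilon_{t-j}$ with $a_0=1$, square-summable $a_j$, and innovations that are Gaussian as $L^2$-limits of Gaussian combinations, uncorrelated and hence i.i.d. The factorization $f=\frac{\sigma_0^2}{2\pi}|\sum a_j e^{ij\lambda}|^2$ a.e. comes from the Szegő outer function. It holds for all $\lambda$ after choosing the representative of $f$, which we are free to do since the density is defined only a.e.

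The main obstacle is the step ensuring nonnegativity without killing divergence. I would first try to verify directly that Kolmogorov's (or Hardy--Rogosinski's) explicit function is a positive combination of nonnegative kernels. If it is not, the fallback is to show that adding a bounded or $L^{2}$ correction, whose Fourier series converges a.e. by Carleson, preserves a.e. divergence. For example, truncate from below: $f = \max(g,-M)+M+1$, where $g-\max(g,-M)$ is supported where $g<-M$. Controlling that piece's series is the delicate part, so the nonnegative kernel construction is preferred.
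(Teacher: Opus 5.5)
Once you have a nonnegative, even, integrable $g$ whose Fourier series diverges unboundedly a.e., the rest of your argument is exactly the paper's proof: add a constant, Herglotz, Gaussian extension, long memory by contradiction, the Gaussian mixing criterion, Kolmogorov--Szeg\H{o}, Wold, spectral factorization. The gap is in obtaining that $g$. Evenness, not nonnegativity, is the real obstacle, and you never establish it.

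Nonnegativity comes for free, because the classical blocks $\frac{1}{x_n}\sum_{l}F_{m_l^n}(\theta-A_l^n)$ are averages of Fej\'er kernels. These blocks, and hence the Kolmogorov--Zygmund function, are not even. Your sentence ``the Kolmogorov examples can be taken as cosine-type series'' is precisely the claim that needs proof. Passing to the cosine part \emph{is} the symmetrization $\frac12(g(\lambda)+g(-\lambda))$ you had just flagged. Its partial sums are $\frac12(S_n(g,\theta)+S_n(g,-\theta))$, and nothing prevents the divergence from being carried entirely by the sine part.

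Concretely, after symmetrizing a block, the dominant terms $D_{m_j}(\theta-A_l)$ and $D_{m_j}(\theta+A_l)$ share the numerator $\sin((m_j+\frac12)\theta)$ but have denominators of opposite signs. They therefore partially cancel, leaving a factor $\cos(A_l/2)\sin(\theta/2)$ that degenerates as $\theta\to0$ or $A_l\to\pi$. The paper's Lemma \ref{lemma:HardyTrigonometricPolynomials} exists to handle this. It symmetrizes each block, restricts to $\theta\ge(\log n)^{-1/6}$ and $A^n_{x_n}\le\pi(1-(\log n)^{-1/6})$, and still obtains $M_n\asymp(\log n)^{1/2}$. Theorem \ref{thm:existenceEvenFunctionFourierDiverges} then assembles $f$ from odd dilations with non-overlapping spectra.

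Your gap can also be closed much more cheaply, by a route different from the paper's. Take any nonnegative $g\in L^1$ whose Fourier series diverges unboundedly a.e., for instance the function of Zygmund, Chapter VIII, \S3, built from the unsymmetrized blocks. Fold it: set $h(\theta)=g(|\theta|)$ on $[-\pi,\pi]$ and extend $2\pi$-periodically. Then $h$ is even, nonnegative and integrable, and $h-g$ vanishes on $(0,\pi)$. Riemann's localization principle therefore gives $S_n(h,\theta)-S_n(g,\theta)\to0$ for every $\theta\in(0,\pi)$. Hence the partial sums of $h$ are unbounded a.e.\ on $(0,\pi)$, and by evenness a.e.\ on $(-\pi,0)$. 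With $f=h+1$, the rest of your proof goes through unchanged, and the truncation fallback becomes unnecessary.

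What the paper's heavier construction buys is an explicit, block-structured formula for the Fourier coefficients, i.e.\ for the autocovariances. Section \ref{section:simulation} uses this formula for plotting and simulation, and folding destroys that structure.
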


\begin{remark}
Recall that for the Fourier series of an even integrable function $f$, \textit{unbounded divergence} almost everywhere means that for almost every $\theta$, the $n$-th symmetric partial Fourier sum of $f$ evaluated at $\theta$, which is real because $f$ is even, oscillates unboundedly as $n$ goes to infinity. More precisely, for almost every $\theta$, the limit superior of the symmetric partial Fourier sums of $f$, evaluated at $\theta$, is equal to $+\infty$, while the limit inferior is equal to $-\infty$ (see Theorem 5.7 in Chapter XIII, \S{5} of \cite{zygmund2003}).
\end{remark}

To prove Theorem \ref{thm:existenceProcessWithDivergentFourierSeries}, we will explicitely construct in Section \ref{section:construction} a process which satisfies the stated conditions. The general idea of the construction is to build an integrable function whose Fourier series diverges almost everywhere and that satisfies the properties of the spectral density of a real-valued stationary process, namely, that it is non-negative, even, and integrable on $(-\pi,\pi)$ (see the last remark of Section 4.3 in \cite{brockwellDavisMethods}).

Note that the first construction of an integrable function whose Fourier series diverges almost everywhere was made by Kolmogorov in 1922 \cite{kolmogorov1923}, when he was 19 years old! In 1926, he improved this result by constructing an integrable function whose Fourier series diverges at every point \cite{kolmogorov1926}.

Another example was constructed by Hardy and Rogosinski later on. Their proof uses similar ideas as Kolmogorov's, although the function itself is very different. The construction presented below is inspired by their work but has been adapted to obtain a valid spectral density for a real-valued process. This is why we coin this process the Hardy--Rogosinski process.

The construction of Hardy and Rogosinski can be found in \cite{hardyRogosinski} (Chapter VI, Section 6.2) and \cite{zygmund2003} (Chapter VIII, \S{3}), while \cite{ulyanov1983} gives a more detailed version of Kolmogorov's construction.

\section{Construction of the Process}
\label{section:construction}

Before getting into the core of the construction of the Hardy--Rogosinski process, which proves Theorem \ref{thm:existenceProcessWithDivergentFourierSeries}, we introduce some key definitions and notations related to Fourier analysis. These tools will enable us to construct a sequence of trigonometric polynomials for which we can find a lower bound on the modulus of the symmetric partial Fourier sums (see Lemma \ref{lemma:HardyTrigonometricPolynomials}).

We will use this sequence to construct an even real-valued non-negative integrable function $f$ whose Fourier series diverges unboundedly almost everywhere (see Theorem \ref{thm:existenceEvenFunctionFourierDiverges}). The properties of $f$ make it a good candidate to be the spectral density of a second-order stationary process. In fact, classical results in time series analysis ensure that there exists a Gaussian process whose spectral density is $f$. It will remain to show that this process satisfies the properties stated in Theorem \ref{thm:existenceProcessWithDivergentFourierSeries}.

\begin{notation}
\label{notation:symmetricPartialFourierSum}
Put $f$ a $2\pi$-periodic function from $\mathbb{R}$ to $\mathbb{C}$ such that $f \in L^{1}(\left(-\pi,\pi\right])$. For all $n \in \mathbb{N}$, for all $\theta \in \mathbb{R}$, we denote $S_{n}(f,\theta)$ the $n$-th symmetric partial Fourier sum associated with the function $f$, evaluated at the point $\theta$, i.e., $$S_{n}(f,\theta)=\sum_{l=-n}^{n}\hat{f}(l)\e^{\mathrm{i}l\theta},\,\text{where}~\hat{f}(l)=\frac{1}{2\pi}\int_{-\pi}^{\pi}f(\theta)\e^{-\mathrm{i}l\theta}d\theta.$$
\end{notation}

We also define the Dirichlet and Fejér kernels, which are crucial in Hardy and Rogosinski's construction. For a comprehensive analysis of these kernels and their link to Fourier series, the interested reader may refer to \cite{hardyRogosinski} (Chapter III, Section 3.6), and \cite{zygmund2003} (Chapter I, \S{1} and Chapter III, \S{3}).

\begin{definition}
For all $n \in \mathbb{N}$, the Dirichlet kernel $D_{n}$ and the Fejér kernel $F_{n}$ are the trigonometric polynomials of degree $n$ defined for all $\theta \in \mathbb{R}$ by
\begin{flalign*}
D_{n}(\theta) &= \frac{1}{2}\sum_{l=-n}^{n}\e^{\mathrm{i}l\theta}=\frac{1}{2}+\sum_{l=1}^{n}\cos(l\theta)=\frac{\sin\hspace{-1.6pt}\left(\left(n+\frac{1}{2}\right)\theta\right)}{2\sin\hspace{-1.6pt}\left(\frac{\theta}{2}\right)}, \\
F_{n}(\theta) &= \frac{1}{n+1}\sum_{l=0}^{n}D_{l}(\theta)=\frac{2}{n+1}{\left(\frac{\sin\hspace{-1.6pt}\left(\frac{1}{2}(n+1)\theta\right)}{2\sin\hspace{-1.6pt}\left(\frac{\theta}{2}\right)}\right)}^{2}=\frac{1}{2}\sum_{l=-n}^{n}\left(1-\frac{|l|}{n+1}\right)\e^{\mathrm{i}l\theta}.
\end{flalign*}
\end{definition}

\begin{lemma}
\label{lemma:propertiesFejerKernel}
For all integer $n$, $F_{n}$ satisfies the following properties: $$F_{n} \ge 0,\,F_{n}~\text{is even},\,\text{and}~\frac{1}{\pi}\int_{-\pi}^{\pi}F_{n}(\theta)d\theta=1.$$ Moreover, there is a constant $C > 0$ such that for all $n \in \mathbb{N}$ and for all $\theta \in (0,\pi]$, we have $$F_{n}(\theta) \le \frac{C}{(n+1)\theta^{2}}.$$
\end{lemma}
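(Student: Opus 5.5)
The plan is to work directly from the two closed forms of $F_{n}$ given in the definition. Non-negativity, evenness and the integral identity come straight from these formulas; the only real content is the pointwise bound.

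\textbf{Non-negativity and evenness.} The closed form
$$F_{n}(\theta)=\frac{2}{n+1}{\left(\frac{\sin\hspace{-1.6pt}\left(\frac{1}{2}(n+1)\theta\right)}{2\sin\hspace{-1.6pt}\left(\frac{\theta}{2}\right)}\right)}^{2}$$
is a positive multiple of a square, so $F_{n}\ge 0$ away from the zeros of $\sin(\theta/2)$. At those points (and everywhere) $F_{n}$ equals a trigonometric polynomial, so it is non-negative everywhere by continuity. Evenness follows from the cosine form: $F_{n}$ is an average of the $D_{l}$, and each $D_{l}(\theta)=\frac{1}{2}+\sum_{k=1}^{l}\cos(k\theta)$ is even.

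\textbf{Normalization.} I will integrate the exponential form term by term over $(-\pi,\pi)$. For $l\neq 0$ we have $\int_{-\pi}^{\pi}\e^{\mathrm{i}l\theta}d\theta=0$, so only the $l=0$ term survives. It contributes $\frac{1}{2}\cdot 2\pi=\pi$, which gives $\frac{1}{\pi}\int_{-\pi}^{\pi}F_{n}(\theta)\,d\theta=1$.

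\textbf{Pointwise bound.} On $(0,\pi]$, I will use $|\sin(\frac{1}{2}(n+1)\theta)|\le 1$ together with the concavity inequality $\sin x\ge \frac{2}{\pi}x$ for $x\in[0,\pi/2]$. Applied at $x=\theta/2\in(0,\pi/2]$, the latter gives $2\sin(\theta/2)\ge \frac{2\theta}{\pi}$. Substituting both into the closed form yields
$$F_{n}(\theta)\le \frac{2}{n+1}\cdot\frac{\pi^{2}}{4\theta^{2}}=\frac{\pi^{2}}{2(n+1)\theta^{2}},$$
so $C=\pi^{2}/2$ works uniformly in $n$.

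The only mild obstacle is this sine lower bound, and Jordan's inequality handles it directly, so I do not expect any genuine difficulty.
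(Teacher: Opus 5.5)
Your proof is correct. The paper gives no argument of its own for this lemma and simply cites Zygmund (Chapter III, \S3), and your direct verification from the closed forms is the standard argument behind that citation: the squared form plus continuity for non-negativity, the $l=0$ term for the normalization, and Jordan's inequality $\sin(\theta/2)\ge\theta/\pi$ for the decay bound. Your argument also gives the explicit constant $C=\pi^{2}/2$, which is the value the paper asserts without proof in Section~\ref{section:simulation} when defining $M_{n}$ for the numerical experiments.
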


\begin{proof}
This is a known result; see, for instance, \cite{zygmund2003} (Chapter III, \S{3}).
\end{proof}

The kernel of the construction lies in the preliminary lemma which follows. Its proof is rather technical and is therefore deferred to Appendix \ref{subsection:appendixHardy}. The proof is constructive, in the sense that all quantities and sequences appearing in the lemma are explicitly defined. These explicit constructions are reused in Section \ref{section:simulation}---with minor numerical adjustments---to compute the theoretical autocovariances of the process and simulate it.

\Needspace{10\baselineskip}
\begin{lemma}
\label{lemma:HardyTrigonometricPolynomials}
There exists
\vspace{-\parskip}
\begin{itemize}
\item a sequence $(\varphi_{n})_{n \in \mathbb{N}}$ of non-negative even trigonometric polynomials with constant term $1/2$,
\item a real sequence $(M_{n})_{n \in \mathbb{N}}$ of positive numbers diverging to infinity,
\item and a sequence $(E_{n})_{n \in \mathbb{N}}$ of Borel subsets of $[-\pi,\pi]$ whose Lebesgue measure converges to $2\pi$,
\end{itemize}
\vspace{-\parskip}
such that for all $n \in \mathbb{N}$, for all $\theta \in E_{n}$, there exists $p_{n}(\theta) \in \mathbb{N}$ such that $\max(1,n) \le p_{n}(\theta) \le q_{n}$,\,and $$\left|S_{p_{n}(\theta)}(\varphi_{n},\theta)\right| \ge M_{n},$$ where $q_{n}$ is the degree of the polynomial $\varphi_{n}$.
\end{lemma}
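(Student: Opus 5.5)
We follow the Kolmogorov / Hardy--Rogosinski scheme: $\varphi_{n}$ is an average of Fejér kernels centred at equally spaced points, each kernel multiplied by a rapidly oscillating cosine. For fixed $n$ we choose an integer $N=N_{n}$ (to be specified) and points $\theta_{k}=\frac{2\pi k}{2N+1}$ for $|k|\le N$; this set is symmetric, so evenness will be automatic. We take integers $m_{-N}<\dots<m_{N}$, chosen symmetrically with $m_{k}$ depending only on $|k|$ if needed, and growing very fast, say $m_{k}$ of order $\lambda^{k+N}$ for a huge $\lambda$. We then set
$$\varphi_{n}(\theta)=\frac{1}{2N+1}\sum_{|k|\le N}\tfrac{1}{2}\Big(F_{m_{k}}(\theta-\theta_{k})+F_{m_{k}}(\theta+\theta_{k})\Big).$$
By Lemma \ref{lemma:propertiesFejerKernel} this is a non-negative even trigonometric polynomial. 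Its constant term is $\frac{1}{2\pi}\int_{-\pi}^{\pi}\varphi_{n}$. Each translated $F_{m}$ has constant term $\frac{1}{2\pi}\cdot\pi=\tfrac12$, so the average also has constant term $\tfrac12$. Its degree is $q_{n}=m_{N}$.

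The key computation uses the identity $S_{p}(F_{m}(\cdot-\alpha),\theta)=F_{p}(\theta-\alpha)$ for $p\le m$, up to the factor $(1-|l|/(m+1))$ on the coefficients. The divergence comes from the fact that $F_{m}$ has Fourier coefficients close to $\tfrac12$ for $|l|\ll m$, so truncating at $p\ll m$ gives approximately $D_{p}(\theta-\alpha)$ rather than $F_{p}$. The plan is to fix $\theta$ in a good set and let $k$ be the index whose node $\theta_{k}$ lies just to the left of $\theta$ at distance $\asymp 1/N$. We then take $p=p_{n}(\theta)$ with $m_{k}\ll p\ll m_{k+1}$. With this choice the terms with index $j\le k$ have all coefficients up to $p$ fully kept. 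Their partial sum equals the kernel itself, which by Lemma \ref{lemma:propertiesFejerKernel} is at most $C/(m_{j}\,\mathrm{dist}^{2})$ away from the diagonal, and hence small. The terms with $j>k$ are truncated far below their degree, so up to an error $O(p/m_{j})$ they contribute $\frac{1}{2N+1}\sum_{j>k}D_{p}(\theta\pm\theta_{j})$. To make these Dirichlet sums add coherently we choose $p$ so that $\sin((p+\tfrac12)(\theta-\theta_{j}))$ has a common sign. This is the Hardy--Rogosinski trick: pick $p$ with $(p+\tfrac12)\frac{2\pi}{2N+1}\in 2\pi\mathbb{Z}$, say $p+\tfrac12$ a multiple of $2N+1$ (halved appropriately), so that all $\sin((p+\frac12)(\theta-\theta_{j}))$ equal one value $\sin((p+\frac12)(\theta-\theta_{k}))$. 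That common value can be kept $\ge c>0$ by choosing among the admissible multiples of $2N+1$ in the huge window $(m_{k},m_{k+1})$. The sum then becomes
$$\frac{c}{2N+1}\sum_{j}\frac{1}{2|\sin((\theta-\theta_{j})/2)|}\gtrsim\frac{1}{2N+1}\sum_{j=1}^{N}\frac{2N+1}{j}\asymp\log N.$$
We therefore set $M_{n}\asymp\log N_{n}$ and let $N_{n}\to\infty$.

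The good set $E_{n}$ excludes small neighbourhoods of the nodes $\pm\theta_{j}$ (needed for the Fejér tail bound and for $\sin$ bounded below), together with a set of $\theta$ that is small in measure. Its complement has measure $o(1)$ as $N\to\infty$, provided the excluded intervals have width $\delta/N$ with $\delta=\delta_{n}\to0$ slowly. The bound then remains of order $\delta$-dependent constant times $\log N$, so we choose $\delta_{n}$ with $\log N_{n}\cdot(\text{loss})\to\infty$. The constraint $p_{n}(\theta)\ge\max(1,n)$ is trivial once $m_{-N}\ge n$.

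The main obstacle is the bookkeeping of the mirrored terms $F_{m_{j}}(\theta+\theta_{j})$ introduced for evenness. Their nodes $-\theta_{j}$ interleave with the $\theta_{j}$ in the ordering by $m_{j}$, which could break the common-sign argument. To handle this, we will index the growth of $m$ by the node position itself, with $m$ increasing along $[-\pi,\pi]$. In that case the even version is simply the average with nodes $\{\pm\theta_{j}\}=\{\theta_{j}\}$ and frequencies assigned by $|j|$, so we will also verify the sign coherence and the $\log N$ lower bound for a one-sided half-sum. The error terms $O(p/m_{j})$ and $O(1/m_{j})$ must then be summed and shown to be $o(1)$ by taking $\lambda$ large.
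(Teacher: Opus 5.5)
Your proposal has a genuine gap at the step you flag as the main obstacle, and the remedy you suggest does not remove it. Since $F_m(-\theta-\alpha)=F_m(\theta+\alpha)$, any even average of Fejér kernels carries a kernel of the same order at $-\alpha$ as at $\alpha$, however you assign the orders. Indexing $m$ by node position only moves the mirrored copies into $g(-\theta)$ when you write $\varphi_n=\frac12(g(\theta)+g(-\theta))$. So for $0<\theta<\theta_j<\pi$, whenever the kernel at $\theta_j$ is truncated so is the one at $-\theta_j$. With $(p+\frac12)\theta_j\in2\pi\mathbb{Z}$, the pair contributes (up to the common weight)
\[
\frac{\sin\left((p+\tfrac12)\theta\right)}{2}\left(\frac{1}{\sin\frac{\theta+\theta_j}{2}}-\frac{1}{\sin\frac{\theta_j-\theta}{2}}\right).
\]
The two denominators have opposite signs, so there is no common sign and your displayed bound with $|\sin((\theta-\theta_j)/2)|$ is false. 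A $\log N$ lower bound for the one-sided half $S_p(g,\theta)$ says nothing about $S_p(\varphi_n,\theta)$ unless you also show that $S_p(g,-\theta)$ is of smaller order.

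The missing idea is a domination argument. The paper uses
\[
\frac{1}{\sin\frac{A-\theta}{2}}-\frac{1}{\sin\frac{A+\theta}{2}}=\frac{2\cos\frac{A}{2}\sin\frac{\theta}{2}}{\sin\frac{A-\theta}{2}\,\sin\frac{A+\theta}{2}}\ge\frac{2\cos\frac{A}{2}\sin\frac{\theta}{2}}{\sin\frac{A-\theta}{2}}.
\]
This is why it keeps all nodes below $\pi(1-(\log n)^{-1/6})$, restricts to $\theta\ge(\log n)^{-1/6}$, and uses only $j\le x_n-\sqrt n$ so that the harmonic sum keeps $\gtrsim\sqrt n$ terms. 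The price is a factor $(\log n)^{-1/3}$, which with the $(\log n)^{-1/6}$ bound on the numerator leaves $M_n$ of order $(\log n)^{1/2}$. Alternatively, you could show directly that the mirrored terms, after the $1/(2N+1)$ normalisation, are $O(\log(1/\theta)+1/\cos(\theta/2))$, hence $o(\log N)$. Either way, neighbourhoods of $0$ and $\pi$ must be cut out of $E_n$ at a slowly shrinking scale, and your plan does not identify this.

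There is also a concrete arithmetic error. With nodes $\theta_j=\frac{2\pi j}{2N+1}$, the condition $(p+\frac12)\frac{2\pi}{2N+1}\in2\pi\mathbb{Z}$ requires $2(2N+1)\mid 2p+1$, which is impossible because $2p+1$ is odd. The most you can get, from $(2N+1)\mid(2p+1)$, is $(p+\frac12)\theta_j\equiv\pi j \pmod{2\pi}$. Then $\sin((p+\frac12)(\theta-\theta_j))=(-1)^j\sin((p+\frac12)\theta)$, so the numerators alternate and the harmonic sum collapses to an alternating sum of order one for typical $\theta$. The paper avoids this by taking nodes $A_l=\frac{4\pi l}{4n+1}$ and $(4n+1)\mid(2m_j+1)$, which gives $(m_j+\frac12)A_l=2\pi l\,\frac{2m_j+1}{4n+1}\in2\pi\mathbb{Z}$.

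Once these two points are repaired, the rest of your scheme matches the paper: fully kept Fejér kernels are bounded via Lemma \ref{lemma:propertiesFejerKernel}, and truncated ones are essentially Dirichlet kernels. The paper's version is simpler, though. It takes $p=m_j$ exactly and uses the exact splitting of Lemma \ref{lemma:equalityPartialFourierSumFn} with $\frac{m_l-m_j}{m_l+1}\ge\frac12$. It then discards the small set where $|\sin((m_j+\frac12)\theta)|<(\log n)^{-1/6}$, rather than tuning $p$ inside a window and tracking $O(p/m_j)$ errors.
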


The sequences provided by Lemma \ref{lemma:HardyTrigonometricPolynomials} may serve as the building blocks for constructing a candidate spectral density whose Fourier series diverges almost everywhere. The next theorem formalizes this result. Its proof is constructive and follows that of Theorem 3.1 in Chapter VIII, \S{3} of \cite{zygmund2003}, and is presented in details in Appendix \ref{subsection:appendixEvenFourier}.

In brief, the idea of the construction is to extract a subsequence of the polynomials $(\varphi_{n})_{n \in \mathbb{N}}$ and rescale and shift each element so as to obtain non-overlapping polynomials. The resulting polynomials are normalized by the square roots of the $M_{n}$ to ensure they are summable almost everywhere. Let $(\psi_{k})_{k \ge 0}$ denote these polynomials, and let $f$ be their sum. Then $f$ is an even, real-valued, non-negative function that is integrable on $\left[-\pi,\pi\right]$. Since the polynomials $(\psi_{k})_{k \ge 0}$ do not overlap, the non-zero Fourier coefficients of $f$ are precisely the non-zero Fourier coefficients of the polynomials, written successively. Finally, using the sets $(E_{n})_{n \in \mathbb{N}}$, it is possible to construct a Borel subset of $[-\pi,\pi]$ with measure $2\pi$ on which the Fourier series of $f$ diverges unboundedly.

This leads to an apparent paradox, whereby $f$ can be expressed as a convergent sum of trigonometric polynomials, with its non-zero Fourier coefficients inherited from those polynomials, yet these same Fourier coefficients generate a trigonometric series that diverges almost everywhere. Since the trigonometric polynomials are summable, one might have expected the Fourier series to converge as well, as both share the same non-zero coefficients.

The explanation is that the sequence $\left(\sum_{j=0}^{k}\psi_{j}(\theta)\right)$, which converges for almost every $\theta$ to $f(\theta)$, is a convergent subsequence of the symmetric partial sums of the trigonometric series whose coefficients are those of the polynomials $(\psi_{k})_{k \ge 0}$. Meanwhile, this trigonometric series diverges unboundedly almost everywhere. \textbf{To sum up, $f$ can be seen as the limit of a convergent subsequence of the symmetric partial sums of a trigonometric series that diverges unboundedly almost everywhere. The Fourier series of $f$ coincides exactly with this divergent trigonometric series.}

\begin{theorem}
\label{thm:existenceEvenFunctionFourierDiverges}
There exists an even real-valued non-negative function $f$ integrable on $\left[-\pi,\pi\right]$ whose Fourier series diverges unboundedly almost everywhere.
\end{theorem}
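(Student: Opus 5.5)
The plan is to follow Zygmund's proof of Theorem 3.1 in Chapter VIII, \S{3}, using the polynomials supplied by Lemma \ref{lemma:HardyTrigonometricPolynomials}. First, pass to a subsequence $(n_k)_{k \ge 0}$ such that $M_{n_k} \ge 4^k$, or more generally $\sum_k M_{n_k}^{-1/2} < \infty$. Along this subsequence we also want $\sum_k m([-\pi,\pi]\setminus E_{n_k}) < \infty$. Both requirements can be met because $M_n \to \infty$ and $m(E_n) \to 2\pi$.

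Next, define
$$\psi_k(\theta) = M_{n_k}^{-1/2}\,\cos(\nu_k \theta)\cdot 2\varphi_{n_k}(\theta).$$
Here the integers $\nu_k$ grow fast enough, for instance $\nu_k > 2(\nu_{k-1}+q_{n_{k-1}}) + q_{n_k}$. This choice makes the spectrum of $\psi_k$, which lies in $\{\pm\nu_k + l : |l| \le q_{n_k}\}$, disjoint from and beyond that of $\psi_0,\dots,\psi_{k-1}$. Each $\psi_k$ is even and real. However, $\cos(\nu_k\theta)\varphi_{n_k}$ is not non-negative, so I would instead use $\psi_k = M_{n_k}^{-1/2}\,(1+\cos(\nu_k\theta))\,\varphi_{n_k}(\theta)$ and keep track of the low-frequency copy of $\varphi_{n_k}$ that this introduces. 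The cleaner alternative is to take $f$ non-negative directly, with $\psi_k = M_{n_k}^{-1/2}\varphi_{n_k}(\nu_k\theta)$. This dilation multiplies frequencies by $\nu_k$, keeps the constant term $1/2$, and preserves evenness and non-negativity. Its divergence sets $\{\theta : \nu_k\theta \in E_{n_k}\}$ still have measure close to $2\pi$, since dilation by an integer preserves the measure of $2\pi$-periodic sets. Either way we set $f=\sum_k \psi_k$. Since $\psi_k \ge 0$ and $\|\psi_k\|_{L^1} = \pi M_{n_k}^{-1/2}$, monotone convergence shows that $f$ is integrable, non-negative and even, and that the Fourier coefficients of $f$ are the sums of those of the $\psi_k$. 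The constant terms all pile up at frequency $0$, and this causes no problem.

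Then comes the divergence. The sets $E'_k$ on which $\psi_k$ has a large partial sum have complements with summable measures, so by Borel--Cantelli almost every $\theta$ lies in $E'_k$ for all large $k$. For such $\theta$, choose the partial sum index that cuts through the block of $\psi_k$ at the position corresponding to $p_{n_k}(\theta)$. The separation of spectra then gives
$$S_{N}(f,\theta) = \sum_{j<k}\psi_j(\theta) + \hat f(0)\text{-corrections} + \bigl(\text{partial block of }\psi_k\bigr)(\theta).$$
The first terms are bounded by partial sums of a pointwise convergent series. The last term has modulus at least $M_{n_k}^{1/2}$ minus a constant-term contribution of $M_{n_k}^{-1/2}/2$. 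Hence $\limsup_N |S_N(f,\theta)| = \infty$.

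The main obstacle is exactly this spectral bookkeeping. With dilation, the partial sum of $\varphi_{n_k}(\nu_k\cdot)$ up to $\nu_k p$ equals $S_p(\varphi_{n_k},\nu_k\theta)$, which is clean. The difficulty is that the blocks of different $k$ all contain frequency $0$ and interleave, because $\varphi_{n_k}(\nu_k\theta)$ has frequencies $\nu_k l$ for $|l|\le q_{n_k}$, and these are interleaved with those of earlier blocks. I would resolve this by taking $\nu_k > \max_{j<k}\nu_j q_{n_j}$. Then all earlier nonzero frequencies lie below $\nu_k$, so $S_{\nu_k p}(f,\theta)$ equals the sum of the full $\psi_j$ for $j<k$, plus $M_{n_k}^{-1/2}S_p(\varphi_{n_k},\nu_k\theta)$, plus the constant terms $\sum_{j>k} M_{n_j}^{-1/2}/2$. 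This uses $p \ge 1$ from Lemma \ref{lemma:HardyTrigonometricPolynomials}. The constant terms are bounded, and the first sum converges almost everywhere, so the lower bound $M_{n_k}^{1/2}$ yields unbounded divergence. Finally, Zygmund's Theorem 5.7 upgrades this to $\limsup = +\infty$ and $\liminf = -\infty$.
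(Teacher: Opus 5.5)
Your proposal is correct and essentially the paper's construction, provided you commit to the dilation $\psi_k=M_{n_k}^{-1/2}\varphi_{n_k}(\nu_k\theta)$ with $\nu_k>\max_{j<k}\nu_j q_{n_j}$ and drop the cosine-modulated variants, which you never carry through; here $\nu_k$ plays the role of the paper's odd integers $c_k$, with the same spacing condition $q_{n_k}c_k<c_{k+1}$. The differences are cosmetic: you get full measure by Borel--Cantelli after thinning so that $\sum_k m([-\pi,\pi]\setminus E_{n_k})<\infty$, whereas the paper uses $m(\mathcal{E}_k)\to 2\pi$ for the $\limsup$ set; and you bound $|S_{\nu_k p}(f,\theta)|$ from below directly via $\sum_{j<k}\psi_j(\theta)\le f(\theta)<\infty$, whereas the paper argues by contradiction on the block difference $S_{c_kp}-S_{c_k-1}$ and so does not need $f(\theta)<\infty$.
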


We can now finish the construction of the Hardy--Rogosinski process and prove Theorem \ref{thm:existenceProcessWithDivergentFourierSeries}.

\begin{proof}[Theorem \ref{thm:existenceProcessWithDivergentFourierSeries}]
Put $\varepsilon > 0$, and denote $f$ the sum of the function obtained in Theorem \ref{thm:existenceEvenFunctionFourierDiverges} and $\varepsilon$. Thus, $f$ is even, real-valued, integrable, and such that $f \ge \varepsilon$ on $[-\pi,\pi]$. Moreover, we have $\log(\varepsilon) \le \log(f) \le f$ on $[-\pi,\pi]$. Since $f$ is integrable, it follows that $f$ is $\log$-summable. Additionally, the Fourier series of $f$ is the same as the Fourier series of the function given by Theorem \ref{thm:existenceEvenFunctionFourierDiverges}, except that its constant term is greater by epsilon. Therefore, the Fourier series of $f$ diverges unboundedly almost everywhere.

For all $h \in \mathbb{Z}$, put $\gamma(h)$ the $h$-th Fourier coefficient of $f$ multiplied by $2\pi$. Since $f$ is even, we obtain
\begin{equation}
\label{eq:covariancesConstructedFromf}
\gamma(h)=\int_{-\pi}^{\pi}f(\theta)\e^{-\mathrm{i}h\theta}d\theta=\int_{-\pi}^{\pi}f(\theta)\e^{\mathrm{i}h\theta}d\theta.
\end{equation}
Since $f$ is even and real-valued, it is clear that $\gamma$ is an even real-valued function defined on $\mathbb{Z}$.

Moreover, put $F$ the function defined from $[-\pi,\pi]$ to $\mathbb{R}$ such that for all $\lambda \in [-\pi,\pi]$, $$F(\lambda)=\int_{-\pi}^{\lambda}f(\theta)d\theta.$$ Then, $F$ is continuous, non-decreasing, and bounded on $[-\pi,\pi]$, with $F(-\pi)=0$. Moreover, we have $$\gamma(h)=\int_{\left(-\pi,\pi\right]}\e^{\mathrm{i}h\theta}dF(\theta),$$ for all $h \in \mathbb{Z}$, thanks to equality (\ref{eq:covariancesConstructedFromf}). By Herglotz's theorem (see Theorem 4.3.1 in \cite{brockwellDavisMethods}), it follows that the function $\gamma$ is non-negative definite. The proof of Theorem 1.5.1 in \cite{brockwellDavisMethods} then guarantees that there exists a strictly stationary centered real-valued Gaussian sequence $(X_{t})_{t \in \mathbb{Z}}$ whose autocovariance function is $\gamma$, and equality (\ref{eq:covariancesConstructedFromf}) ensures that its spectral measure is absolutely continuous with respect to the Lebesgue measure, with spectral density $f$. We refer to the process $(X_{t})_{t \in \mathbb{Z}}$ as the Hardy--Rogosinski process. Since the Fourier series of $f$ diverges at some points, the autocovariances $\gamma(h)$, $h \in \mathbb{Z}$, are not absolutely summable, and $(X_{t})_{t \in \mathbb{Z}}$ is a long-memory process.

Additionally, since $f$ is log-summable, Kolmogorov's formula (see \cite{brockwellDavisMethods}, Section 5.8, and \cite{hannan}, Chapter III, Section 3, Theorem 3) guarantees that the sequence $(X_{t})_{t \in \mathbb{Z}}$ is not deterministic (see \cite{brockwellDavisMethods}, Section 5.7, for the definition of a deterministic process).

Hence, the process $(X_{t})_{t \in \mathbb{Z}}$ is a strictly stationary centered nondeterministic Gaussian process. According to \cite{brockwellDavisMethods} (Theorem 5.7.1), \cite{davisGaussian}, and \cite{ihara} (Section 2.2), Wold's decomposition allows $X_{t}$ to be expressed as $$X_{t}=\sum_{j=0}^{\infty}a_{j}\varepsilon_{t-j}+V_{t},\,\forall t \in \mathbb{Z},$$ where $(\varepsilon_{t})_{t \in \mathbb{Z}}$ are i.i.d. Gaussian variables with mean zero and variance $\sigma_{0}^{2} > 0$, $(a_{j})_{j \in \mathbb{N}}$ is a square-summable sequence with $a_{0}=1$, and $(V_{t})_{t \in \mathbb{Z}}$ is a deterministic Gaussian process independent of~$(\varepsilon_{t})_{t \in \mathbb{Z}}$.

Moreover, the process $(X_{t})_{t \in \mathbb{Z}}$ has an absolutely continuous spectral measure and a log-summable spectral density. It follows that $(X_{t})_{t \in \mathbb{Z}}$ is purely nondeterministic (i.e., it is not deterministic and the deterministic part in its Wold's decomposition is zero), see for instance, \cite{skorokhodStochastic} (Theorem 2 in Chapter IV, Section 7), \cite{shiryayevKolmogorov} (Section 27.7, Theorem 22), \cite{giraitisKoulSurgailis} (Theorem 3.2.1), or \cite{ihara} (Section 2.2). Thus, we have $V_{t}=0$ for all $t \in \mathbb{Z}$. In addition, \cite{hannan} (Chapter III, Section 3, Theorem 4) and \cite{skorokhodStochastic} (Lemma 1 in Chapter IV, Section 7) give us the desired form for the spectral density of $(X_{t})_{t \in \mathbb{Z}}$.

Finally, the sequence $(X_{t})_{t \in \mathbb{Z}}$ is a stationary Gaussian sequence with an absolutely continuous spectral measure, so it is mixing, and hence weakly mixing and ergodic as well.
\end{proof}

\begin{remark}
A second-order stationary sequence $X$ is purely nondeterministic if and only if its spectral distribution function is absolutely continuous and its spectral density $f$ is $\log$-summable, see \cite{skorokhodStochastic} (Theorem 2 in Chapter IV, Section 7), \cite{shiryayevKolmogorov} (Section 27.7, Theorem 22), or \cite{ihara} (Section 2.2). In addition, if a second-order stationary process $X$ has a spectral distribution function which is absolutely continuous, then it is non deterministic if and only if its spectral density is log-summable, see Kolmogorov's formula in \cite{brockwellDavisMethods} (Section 5.8) and \cite{hannan} (Chapter III, Section 3, Theorem 3).

Thus, if a second-order stationary sequence $X$ has an absolutely continuous spectral distribution function, it is either purely nondeterministic or deterministic. Note that there are examples of deterministic sequences with absolutely continuous spectral distribution functions, see Example 5.6.1 in \cite{brockwellDavisMethods}.
\end{remark}

\begin{remark}
It may be possible to strengthen the result of Theorem \ref{thm:existenceProcessWithDivergentFourierSeries} by saying that the Fourier series of the spectral density of $(X_{t})_{t \in \mathbb{Z}}$ diverges unboundedly everywhere. To construct such a spectral density, one could leverage the proofs of Theorem 4.1 and Lemma 4.2 in Chapter VIII, \S{4} of \cite{zygmund2003}, which show that there is an integrable function $f$ whose Fourier series diverges unboundedly everywhere. The construction of Zygmund relies on ideas similar to those in the proofs of Lemma \ref{lemma:HardyTrigonometricPolynomials} and Theorem \ref{thm:existenceEvenFunctionFourierDiverges}, although it is more complex.
\end{remark}

\section{Simulation and Empirical Autocovariances}
\label{section:simulation}

As a numerical experimentation, we want to plot the first few autocovariances of the process $(X_{t})_{t \in \mathbb{Z}}$ and simulate some trajectories of this process. To achieve this, we use the same constructions as in the proofs of Lemma \ref{lemma:HardyTrigonometricPolynomials} and Theorem \ref{thm:existenceEvenFunctionFourierDiverges}. However, since many quantities involved in the construction of the spectral density of $(X_{t})_{t \in \mathbb{Z}}$ grow exponentially fast, we make some minor adjustments.

We start by introducing some notations used in the proof of Lemma \ref{lemma:HardyTrigonometricPolynomials}. For all $n \ge 50$, put $$x_{n}=\left\lfloor\frac{4n+1}{4}\hspace{-1.6pt}\left(1-\frac{1}{{(\log(n))}^{\frac{1}{6}}}\right)\right\rfloor~\text{and}~M_{n}=\frac{1}{16\pi^{2}}\frac{4n+1}{x_{n}}{(\log(n))}^{\frac{1}{2}}-\frac{\pi^{2}}{2}.$$ Compared with the proof of Lemma \ref{lemma:HardyTrigonometricPolynomials}, we replaced the constant $C$ in the definition of $M_{n}$ with $\pi^{2} / 2$, as it can be easily shown that it is a valid value for $C$ so that the upper bound of the Fejér kernel in Lemma \ref{lemma:propertiesFejerKernel} holds.

For all $l=1,\dotsc,x_{n}$, define $A_{l}^{n}$ as $$A_{l}^{n}=\frac{4\pi{l}}{4n+1}.$$ Let $m_{1}^{n},\dotsc,m_{x_{n}}^{n}$ denote some integers defined by putting $m^{(0)}=m_{l}^{n}$ if $l > 0$ (and $m_{l}^{n}$ has been constructed), $m^{(0)}=n^{4}$ otherwise, and $$m_{l+1}^{n}=\frac{(4n+1)k-1}{2},$$ where $k$ is the smallest odd integer larger than the quotient of the Euclidean division of $2m^{(0)}+1$ by $4n+1$.

Looking at the proof of Lemma \ref{lemma:HardyTrigonometricPolynomials}, the sequence of trigonometric polynomials $(\varphi_{n})$ is such that
\begin{flalign}
\varphi_{n}(\theta) &= \frac{1}{2x_{n}}\sum_{l=1}^{x_{n}}\sum_{r=-m_{l}^{n}}^{m_{l}^{n}}\left(1-\frac{|r|}{m_{l}^{n}+1}\right)\cos(rA_{l}^{n})\e^{\mathrm{i}r\theta} \notag \\
&= \frac{1}{2x_{n}}\sum_{r=-m_{x_{n}}^{n}}^{m_{x_{n}}^{n}}\left(\hspace{1.7pt}\sum_{l=1}^{x_{n}}\left(1-\frac{|r|}{m_{l}^{n}+1}\right)\cos(rA_{l}^{n})\mathds{1}_{\left\{|r| \le m_{l}^{n}\right\}}\right)\e^{\mathrm{i}r\theta}. \label{eq:rewritePolynomialPhin}
\end{flalign}

We now make the following adjustment compared to the proof of Lemma \ref{lemma:HardyTrigonometricPolynomials}: we consider all the values of $M_{n}$ instead of starting from $n_{0}$ such that $M_{n} > 0$ for all $n \ge n_{0}$. More precisely, we define by induction an increasing sequence $(n_{k})_{k \ge 1}$ of integers such that for all $k \ge 1$, we have $$\frac{1}{M_{n_{k}}+\frac{\pi^{2}}{2}} < \frac{10}{2^{k}}.$$ Instead of the formula (\ref{eq:definitionFunctionfFourierDivergent}) in the proof of Theorem \ref{thm:existenceEvenFunctionFourierDiverges}, we define $f(\theta)$ as
\begin{equation}
\label{eq:newDefinitionfTheta}
f(\theta)=\frac{1}{2\pi}\left(5-\frac{1}{2}\sum_{k=1}^{\infty}\frac{1}{\sqrt{M_{n_{k}}+\frac{\pi^{2}}{2}}}+\sum_{k=1}^{\infty}\frac{\varphi_{n_{k}}(c_{k}\theta)}{\sqrt{M_{n_{k}}+\frac{\pi^{2}}{2}}}\right)\hspace{-1.7pt},
\end{equation}
where $(c_{k})_{k \ge 1}$ is a sequence of odd integers such that for all $k \ge 1$, $q_{n_{k}}c_{k} < c_{k+1}$, with $q_{n_{k}}$ the degree of the polynomial $\varphi_{n_{k}}$.

With this new definition, $f$ is well-defined even if some values of the sequence $(M_{n})_{n \in \mathbb{N}}$ are negative. Thus, unlike in the proof of Lemma \ref{lemma:HardyTrigonometricPolynomials}), we don't need to find an integer $n_{0} \ge 50$ such that for all $n \ge n_{0}$, $M_{n} > 0$. This integer $n_{0}$ is indeed enormous, making numerical experiments impractical. With our adjustment, we can just put $n_{0}=50$. Moreover, the reasoning of the proof of Theorem \ref{thm:existenceEvenFunctionFourierDiverges} still holds, and the first few polynomials $\varphi_{0},\varphi_{1},\varphi_{2},\dotsc$ don't involve overly large numbers. Note also that we have $\hat{f}(0)=5/(2\pi)$.

Under this adjusted construction, we can take $n_{1}=500$, $n_{2}=10000$, $c_{1}=1$, and put $c_{2}$ the smallest odd integer larger than $m_{x_{n_{1}}}^{n_{1}}$. In fact, calculations show that $m_{x_{n_{1}}}^{n_{1}}$ is odd, and we then take $c_{2}=m_{x_{n_{1}}}^{n_{1}}+2$.

Now, from the definition of $f(\theta)$ in (\ref{eq:newDefinitionfTheta}), we see that
\begin{equation}
\label{eq:fAsASumOfPsikMain}
f(\theta)=\sum_{k=0}^{\infty}\psi_{k}(\theta),
\end{equation}
where $$\psi_{k}(\theta)=
\begin{cases}
\vspace{5pt}
\displaystyle\frac{5}{2\pi} & \text{if}~k=0, \\
\displaystyle\frac{1}{2\pi}\frac{\varphi_{n_{k}}(c_{k}\theta)-\frac{1}{2}}{\sqrt{M_{n_{k}}+\frac{\pi^{2}}{2}}} & \text{if}~k \ge 1.
\end{cases}$$ Moreover, it follows from equality (\ref{eq:rewritePolynomialPhin}) that for all $k \ge 1$,
\begin{equation}
\label{eq:psikTrigonometricExpansion}
\psi_{k}(\theta)=\sum_{\substack{r=-m_{x_{n_{k}}}^{n_{k}} \\ r \ne 0}}^{m_{x_{n_{k}}}^{n_{k}}}\left(\frac{1}{2\pi}\frac{1}{\sqrt{M_{n_{k}}+\frac{\pi^{2}}{2}}}\frac{1}{2x_{n_{k}}}\sum_{l=1}^{x_{n_{k}}}\left(1-\frac{|r|}{m_{l}^{n_{k}}+1}\right)\cos(rA_{l}^{n_{k}})\mathds{1}_{\left\{|r| \le m_{l}^{n_{k}}\right\}}\right)\e^{\mathrm{i}rc_{k}\theta}.
\end{equation}
Following the discussion regarding the Fourier coefficients of $f$ in the proof of Theorem \ref{thm:existenceEvenFunctionFourierDiverges}, we know that for all $k \ge 1$, the coefficients of $\psi_{k}$ with degrees $c_{k}$ to $m_{x_{n_{k}}}^{n_{k}}c_{k}$ are exactly the $c_{k}\text{-th},\dotsc,m_{x_{n_{k}}}^{n_{k}}c_{k}\text{-th}$ Fourier coefficients of $f$, respectively. Because $\gamma(h)=2\pi\hat{f}(h)$, it follows that for all $h=c_{k},\dotsc,m_{x_{n_{k}}}^{n_{k}}c_{k}$, $$\gamma(h)=\begin{cases}
\vspace{5pt}
\displaystyle\frac{1}{\sqrt{M_{n_{k}}+\frac{\pi^{2}}{2}}}\frac{1}{2x_{n_{k}}}\sum_{l=1}^{x_{n_{k}}}\left(1-\frac{h}{m_{l}^{n_{k}}+1}\right)\cos(hA_{l}^{n_{k}})\mathds{1}_{\left\{h \le m_{l}^{n_{k}}\right\}} & \text{if}~h~\text{is a multiple of}~c_{k}, \\
\displaystyle 0 & \text{otherwise}.
\end{cases}$$ Moreover, $\gamma(0)=5$, and if $h > 0$ and $h$ is not equal to one of $c_{k},\dotsc,m_{x_{n_{k}}}^{n_{k}}c_{k}$ for any $k \ge 1$, then $\gamma(h)=0$. Thus, we have an explicit formula for $\gamma(h)$, whatever the value of $h \in \mathbb{Z}$. This means that we can plot the autocovariance function of the process $(X_{t})_{t \in \mathbb{Z}}$, and we can even simulate this process.

\begin{figure}[htbp]
\captionsetup[subfigure]{skip=0pt}
    \centering
    \caption{Theoretical autocovariances of the process $(X_t)_{t \in \mathbb{Z}}$.\vspace{12pt}}

    \begin{subfigure}{\linewidth}
        \centering
        \caption{Autocovariances from lag $1$ to lag $m_{x_{n_{1}}}^{n_{1}}$}
        \label{figure:autocovariancesc1}
        \def\svgwidth{22cm}
        \resizebox{\linewidth}{!}{
            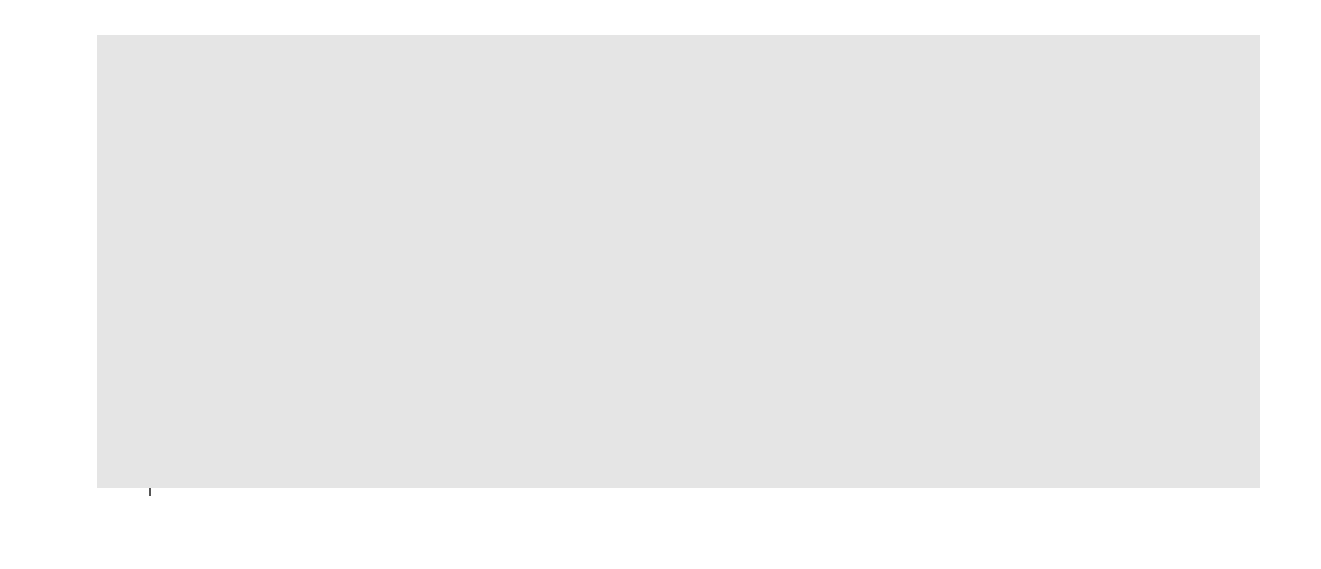
        }
    \end{subfigure}

    \vspace{2em}

    \begin{subfigure}{\linewidth}
        \centering
        \caption{Autocovariances from lag $c_{2}=m_{x_{n_{1}}}^{n_{1}}+2$ to lag $m_{x_{n_{2}}}^{n_{2}}c_{2}$}
        \label{figure:autocovariancesc2}
        \def\svgwidth{22cm}
        \resizebox{\linewidth}{!}{
            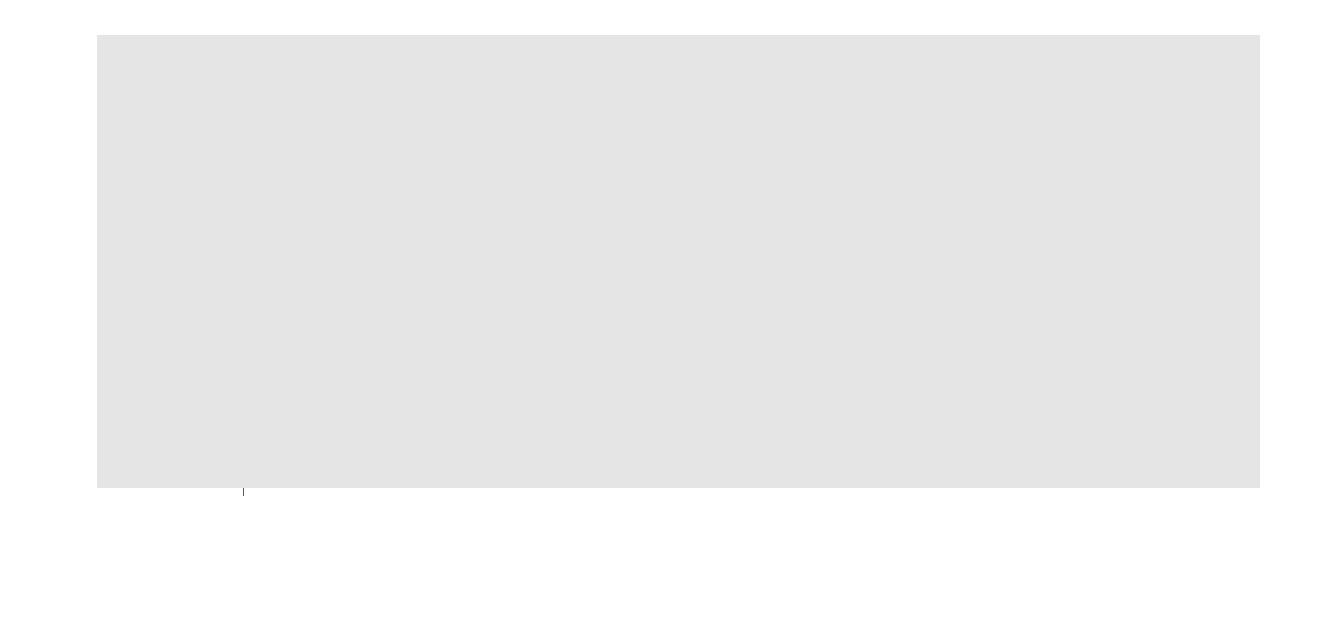
        }
    \end{subfigure}

\vspace{5pt}
\fontsize{10}{12}\selectfont
It may not be evident from the above graph, but only the autocovariances at lags that are multiples of $c_{2}$ and are between $c_{2}$ and $m_{x_{n_{2}}}^{n_{2}}c_{2}$ may be nonzero. All others are zero. Thus, among the $(m_{x_{n_{2}}}^{n_{2}}-1)c_{2}+1$ autocovariances with lags from $c_{2}$ to $m_{x_{n_{2}}}^{n_{2}}c_{2}$, at most $m_{x_{n_{2}}}^{n_{2}}$ are nonzero.
\end{figure}

Figure \ref{figure:autocovariancesc1} and Figure \ref{figure:autocovariancesc2} show the autocovariances of the process $(X_{t})_{t \in \mathbb{Z}}$ from lag $1$ to lag $m_{x_{n_{2}}}^{n_{2}}c_{2}$. More precisely, Figure \ref{figure:autocovariancesc1} displays the autocovariances from lag $1$ to lag $m_{x_{n_{1}}}^{n_{1}}$, while Figure \ref{figure:autocovariancesc2} displays the autocovariances from lag $c_{2}=m_{x_{n_{1}}}^{n_{1}}+2$ to lag $m_{x_{n_{2}}}^{n_{2}}c_{2}$ (the autocovariance at lag $m_{x_{n_{1}}}^{n_{1}}+1$ is equal to zero). In other words, these figures represent the first two connected blocks as described in the proof of Theorem \ref{thm:existenceEvenFunctionFourierDiverges}, i.e., they show the Fourier coefficients of $\psi_{1}$ and $\psi_{2}$, respectively, multiplied by $2\pi$.

We observe that the autocovariances of $(X_{t})_{t \in \mathbb{Z}}$ exhibit oscillations which take both negative and positive values. Additionally, the connected block associated with $\psi_{k}$, contains more and more terms as $k$ increases. In the meantime, the blocks are also filled with more and more zeros, due to the non-overlapping operation obtained through the multiplication by $c_{k}$. Yet, the oscillations are such that the $k$-th block presents spikes with a period approximately equal to $2n_{k}$. In particular, this period extends as $k$ increases. These spikes are most prominent at the beginning of the blocks and gradually diminish until they disappear entirely at the end of the blocks. Hence, the autocovariances at the end of the blocks are very close to zero or actually zero.

At the same time, the height of the spikes at the beginning of the blocks decreases as $k$ increases, thanks to the division by $\sqrt{M_{n_{k}}+\frac{\pi^{2}}{2}}$, a quantity that tends to $+\infty$ as $k$ goes to $+\infty$. This ensures that the autocovariances go to zero when the lag goes to infinity, which is a necessary condition because they are the Fourier coefficients of an integrable function (cf the Riemann--Lebesgue lemma).

Finally, notice that in these two figures, the quantities involved are gigantic. For instance, $m_{1}^{n_{1}}$ in the polynomial $\psi_{n_{1}}$ has order of magnitude $10^{10}$, while $m_{66}^{n_{1}}$, $m_{110}^{n_{1}}$, and $m_{x_{n_{1}}}^{n_{1}}$, which appear in Figure \ref{figure:autocovariancesc1}, have orders $10^{30}$, $10^{43}$, and $10^{49}$, respectively. The number $c_{2}$ is also on the order of $10^{49}$. On the other hand, $m_{1000}^{n_{2}}c_{2}$, $m_{2500}^{n_{2}}c_{2}$, and $m_{x_{n_{2}}}^{n_{2}}c_{2}$, which appear in Figure \ref{figure:autocovariancesc2}, have orders $10^{366}$, $10^{818}$, and $10^{996}$, respectively. Yet, the adjustments we've made make it possible to calculate the first few autocovariances efficiently. The calculations can be quickly performed using Python, which can handle very large integers, and by leveraging the $2\pi$-periodicity of the cosine function. However, the subsequent blocks, associated with the polynomials $\psi_{k}$ for $k \ge 3$, involve much larger numbers due to the very rapid growth of $(n_{k})_{k \ge 1}$, which is a result of the extremely slow increase of the sequence $(M_{n})_{n \in \mathbb{N}}$.

Now, let us simulate the process $(X_{t})_{t \in \mathbb{Z}}$. Since this is a Gaussian process and we can calculate its autocovariances, the task is quite straightforward, using for instance the Cholesky decomposition of the covariance matrix. More specifically, for $n \ge 1$, the covariance matrix $\Sigma_{n}$ of $(X_{t})_{1 \le t \le n}$ is the Toeplitz matrix where both the first column and first row are $(\gamma(0),\dotsc,\gamma(n-1))$, with $\gamma(h)$ denoting the autocovariance at lag $h$ of the process $(X_{t})_{t \in \mathbb{Z}}$. We then set $Y=LZ$, where $\Sigma_{n}=LL^\prime$ is the Cholesky decomposition of $\Sigma_{n}$, and $Z$ is a centered Gaussian vector of size $n$ with the identity matrix as its covariance matrix. It follows that the random vector $Y$ has the same distribution as $(X_{t})_{1 \le t \le n}$.

Figure \ref{figure:empiricalAutocovariances} shows the empirical autocovariances averaged over 1000 simulations of length 100000 of the Hardy--Rogosinski process, together with the corresponding theoretical autocovariances for comparison. For readability, the $y$-axis is truncated at 1.25, although the autocovariance at lag 0, $\gamma(0)$, is actually equal to 5. As expected, the empirical values are close to the theoretical ones, since the process $(X_{t})_{t \in \mathbb{Z}}$ is strictly stationary and ergodic. This implies that its empirical autocovariances converge almost surely towards their theoretical counterparts.

\vspace{5pt}
\begin{figure}[htbp]
    \centering
    \caption{Empirical autocovariances from 1000 simulations of a 100000-step trajectory}
    \label{figure:empiricalAutocovariances}
    \resizebox{0.7\linewidth}{!}{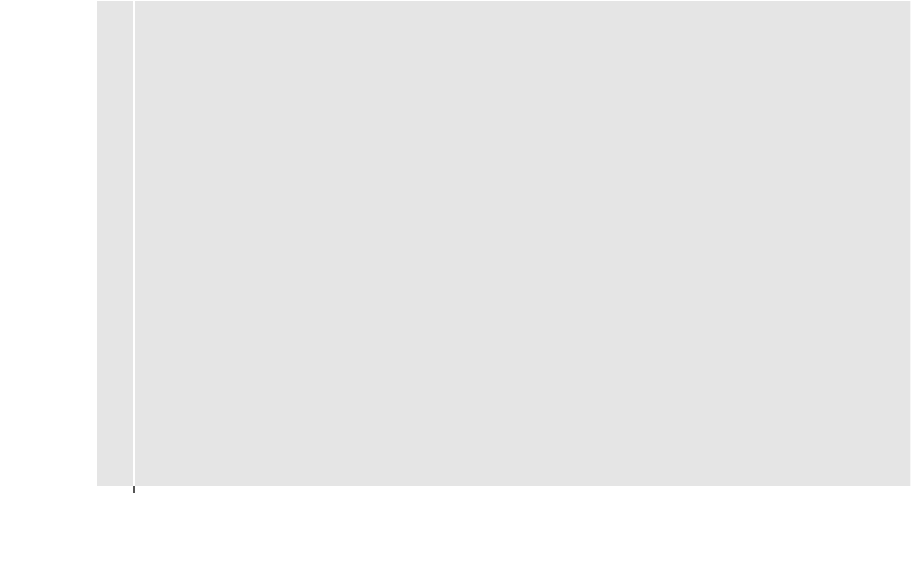}
\end{figure}

The construction of the spectral density of the process $(X_{t})_{t \in \mathbb{Z}}$ might seem a bit artificial. Indeed, the autocovariances have a rather unconventional form that doesn't correspond to any specific physical phenomenon. Hence, while it is not merely a theoretical construct since we can simulate this process, its usefulness appears to be mostly from a theoretical perspective.

Finally, notice that it is not possible to test whether the spectral density of a process has a convergent or divergent Fourier series, just by knowing a finite number of its autocovariances. For instance, consider the centered Gaussian process $(Y_{t})_{t \in \mathbb{Z}}$ whose spectral density is obtained using the same construction as the spectral density of $(X_{t})_{t \in \mathbb{Z}}$, but summing only $\psi_{0},\dotsc,\psi_{p}$ in equality (\ref{eq:fAsASumOfPsikMain}), instead of all the polynomials $(\psi_{k})_{k \ge 0}$. Then, the first few autocovariances of $(X_{t})_{t \in \mathbb{Z}}$ and $(Y_{t})_{t \in \mathbb{Z}}$ are the same. However, the spectral density of $(Y_{t})_{t \in \mathbb{Z}}$ is a trigonometric polynomial and admits a Fourier representation, while the Fourier series of the spectral density of $(X_{t})_{t \in \mathbb{Z}}$ diverges almost everywhere.

\section{Discussion and Stronger Definitions of Long Memory}
\label{section:discussion}

Since the condition that autocovariances are not absolutely summable is too broad, stronger definitions of long memory have been proposed in the literature. The most common definitions fall under the so-called second-order theory of long-range dependence. They focus on properties that are expressed either in terms of the autocovariances (time domain) or the spectral measure (spectral~domain).

In order to have conditions that are general enough, these definitions involve slowly varying functions, a concept that originated in the study of regular variation in the 1930s, particularly through the work of Jovan Karamata. We first introduce some key definitions related to regular variation. The interested reader may refer to \cite{binghamGoldie}, Chapter 1, for a comprehensive treatment of slowly varying~functions.

\begin{definition}
Put $a \in \mathbb{R}$ and let $L$ be a measurable real function defined on $[a,+\infty)$. The function $L$ is said to be slowly varying at infinity (in Karamata's sense) if $L$ is positive on some neighbourhood of infinity and for all $\lambda > 0$, we have $$\frac{L(\lambda{x})}{L(x)} \underset{x \to +\infty}{\longrightarrow} 1.$$
\end{definition}

\begin{remark}
This definition can be extended to consider functions that are slowly varying at some specific point in $\mathbb{R}$. For instance, a measurable function $L$ defined on $(0,a]$, where $a > 0$, is said to be slowly varying at zero if it is positive on some neighbourhood of 0 and the function $x \mapsto L\hspace{-1.6pt}\left(\frac{1}{x}\right)$, defined on $[1/a,+\infty)$, is slowly varying at infinity.
\end{remark}

\begin{definition}
\label{def:quasiMonotoneFunction}
Put $a \in \mathbb{R}$ and let $L$ be a measurable real function defined on $[a,+\infty)$ such that $L$ is locally of bounded variation and slowly varying. $L$ is said to be quasi-monotone if for some $\delta > 0$, $$\int_{a}^{x}u^{\delta}\left|dL(u)\right| \underset{x \to +\infty}{=} O\hspace{-1.6pt}\left(\rule{0pt}{9.1pt}\right.\hspace{-3.2pt}x^{\delta}L(x)\hspace{-3.2pt}\left.\rule{0pt}{9.1pt}\right)\hspace{-1.7pt}.$$
\end{definition}

\begin{remark}
According to Corollary 2.8.2 in \cite{binghamGoldie}, ``for some $\delta > 0$'' can be replaced with ``for some $\delta < 0$'' in Definition \ref{def:quasiMonotoneFunction}. Moreover, according to Theorem 2.7.2 and the remark following Theorem 2.7.3 in \cite{binghamGoldie}, ``for some $\delta > 0$'' can be replaced with ``for all $\delta > 0$'' in Definition \ref{def:quasiMonotoneFunction}.
\end{remark}

\begin{examples}
Classic examples of slowly varying functions at infinity include any function that converges to a positive real number at infinity. Moreover, Lemma 2.7.1 in \cite{binghamGoldie} argues that any monotone function which is slowly varying at infinity is quasi-monotone. In particular, the functions $x \mapsto \log(x)$, $x \mapsto 1/\log(x)$, and $x \mapsto \log(\log(x))$ are slowly varying at infinity and quasi-monotone.
\end{examples}

We now have all the necessary tools to introduce more satisfying definitions of long memory. Consider a second-order stationary real-valued time series $(X_{t})_{t \in \mathbb{Z}}$. The following conditions of long memory are essentially drawn from \cite{pipirasTaqqu2017} (Section 2.1), \cite{giraitisKoulSurgailis} (Section 3.1), \cite{beran2013} (Section 1.3.1), and \cite{samorodnitsky2016} (Section 6.2), although we have made a few adjustments for the sake of mathematical rigor.

The sequence $(X_{t})$ is said to be long-range dependent if it satisfies one of the two conditions below.

\begin{condition}
\label{condition:longMemoryAutocovariances}
The autocovariance function $\gamma$ of the time series $(X_{t})$ satisfies $$\gamma(h)=L_{1}(h){h}^{2d-1},\,\forall h \in \mathbb{N}^{*},$$ where $d \in (0,1/2)$ and $L_{1} \colon [0,+\infty) \to \mathbb{R}$ is a slowly varying function at infinity.
\end{condition}

\begin{condition}
\label{condition:longMemorySpectralDensity}
The spectral measure of $(X_{t})$ is absolutely continuous with respect to the Lebesgue measure, and there is a representative $f$ of the equivalence class of the spectral density which satisfies $$f(\lambda)=L_{2}(\lambda)\lambda^{-2d},\,\forall \lambda \in (0,\pi],$$ where $d \in (0,1/2)$ and $L_{2} \colon (0,\pi] \to [0,+\infty]$ is a non-negative slowly varying function at zero.
\end{condition}

If one of these conditions is satisfied, the parameter $d \in (0,1/2)$ is called the memory parameter (\cite{giraitisKoulSurgailis}, Definition 3.1.3) or the long-range dependence parameter (\cite{pipirasTaqqu2017}, Definition 2.1.5).

\begin{remark}
Notice that Condition \ref{condition:longMemorySpectralDensity} can be extended so as to require the spectral measure to have a density with respect to the Lebesgue measure only on a neighbourhood of 0, say $(-\delta,\delta)$ with $\delta \in (0,\pi]$. This is for example the approach of Samorodnitsky in \cite{samorodnitsky2007} (Section 5) and \cite{samorodnitsky2016} (Section 6.2).

Moreover, some authors define short and long memory in the spectral domain similarly to Condition \ref{condition:longMemorySpectralDensity}, but writing $f$ as the sum of the trigonometric series with autocovariances as coefficients, without making any further assumptions on the underlying process. Such a definition is not satisfying, because a second-order stationary process with an absolutely continuous spectral measure may have a spectral density whose Fourier series diverges almost everywhere, cf Section \ref{section:hardyRogosinski}. This is why in Condition \ref{condition:longMemorySpectralDensity}, we chose to work with a representative of the spectral density, without referring to its Fourier~series.

In fact, even when there is a representative of the spectral density which is continuous, the definition of long memory involving Fourier series is probably not satisfying. Indeed, according to \cite{katznelson1966}, for any Borel set with zero Lebesgue measure, we can find a continuous complex function $f$ whose Fourier series diverges on this Borel set. Thus, it may be possible---though yet to be proven---to find a second-order stationary real-valued sequence with an absolutely continuous spectral measure and a continuous spectral density, such that the Fourier series of the spectral density diverges on a dense set of zero measure, like $\mathbb{Q} \cap \left[-\pi,\pi\right]$. Such a process would exhibit \textit{short memory in the spectral domain}, in the sense that it has a spectral density that is continuous at the origin (see \cite{samorodnitsky2016}, Remark 6.2.4).
\end{remark}

It is easy to see that Conditions \ref{condition:longMemoryAutocovariances} and \ref{condition:longMemorySpectralDensity} imply that the autocovariances of the sequence $(X_{t})_{t \in \mathbb{Z}}$ are not absolutely summable. Nonetheless, these two conditions are not necessarily equivalent a priori.

In addition, it is known that Condition \ref{condition:longMemoryAutocovariances} does not imply that the spectral measure of $(X_{t})_{t \in \mathbb{Z}}$ is absolutely continuous. For instance, \cite{gubner2005} gives an example of autocovariance function satisfying Condition \ref{condition:longMemoryAutocovariances} with $L_{1}(h) \sim c$ as $h \to +\infty$, where $c \in (0,+\infty)$, such that the associated spectral measure is singular. Even if we assume that the spectral measure is absolutely continuous, it is not obvious whether Condition \ref{condition:longMemoryAutocovariances} or Condition \ref{condition:longMemorySpectralDensity} implies that the Fourier series of the spectral density converges almost everywhere. Fortunately, if we are willing to make additional assumptions on the slowly varying functions, the notion of quasi-monotonicity guarantees that all these implications hold. Furthermore, the Fourier series of the spectral density will converge everywhere except at 0, and Condition \ref{condition:longMemoryAutocovariances} and Condition \ref{condition:longMemorySpectralDensity} become equivalent, as long as the slowly varying functions $L_{1}$ and $L_{2}$ are quasi-monotone.

\begin{proposition}
\label{prop:equivalenceL1L2QuasiMonotone}
The two following assertions are equivalent:
\vspace{-\parskip}
\begin{enumerate}[label=(\roman*)]
\item Condition \ref{condition:longMemoryAutocovariances} is satisfied with $L_{1}$ quasi-monotone,
\item Condition \ref{condition:longMemorySpectralDensity} is satisfied with $L_{2}$ quasi-monotone.
\end{enumerate}
\vspace{-\parskip}
In addition, if one of these assertions is satisfied, the spectral measure of $(X_{t})_{t \in \mathbb{Z}}$ is absolutely continuous, and the Fourier series of its spectral density converges everywhere on $(0,\pi]$ and diverges to $+\infty$ at 0. Moreover, Condition \ref{condition:longMemorySpectralDensity} holds with $f$ the sum of the Fourier series of the spectral density, and we have $$L_{2}(x) \underset{x \to 0}{\sim} L_{1}\hspace{-1.6pt}\left(\frac{1}{x}\right)\frac{1}{\pi}\Gamma(2d)\cos(d\pi),\,\text{and}~L_{1}(x) \underset{x \to +\infty}{\sim} L_{2}\hspace{-1.6pt}\left(\frac{1}{x}\right)2\Gamma(1-2d)\sin(d\pi).$$
\end{proposition}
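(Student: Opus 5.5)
The proof rests on the classical Abelian--Tauberian theorems for trigonometric series with quasi-monotone regularly varying coefficients, as in Pipiras and Taqqu \cite{pipirasTaqqu2017} (Section 2.2) and Bingham--Goldie--Teugels \cite{binghamGoldie} (Section 4.10). I treat the two directions separately, and in each direction I also extract the statements about the Fourier series and the asymptotic constants.

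\textbf{Direction (i) $\Rightarrow$ (ii).} Assume $\gamma(h)=L_{1}(h)h^{2d-1}$ with $L_{1}$ quasi-monotone and $d\in(0,1/2)$. Since $L_{1}$ is quasi-monotone, the sequence $c_h=L_1(h)h^{2d-1}$ is eventually of bounded variation with $\sum_{h\ge H}|c_h-c_{h+1}| = O(c_H)$, and it tends to $0$ because $2d-1<0$.
\begin{enumerate}
\item Summation by parts against the Dirichlet kernel then shows that $g(\lambda)=\frac{1}{2\pi}\bigl(\gamma(0)+2\sum_{h\ge1}\gamma(h)\cos(h\lambda)\bigr)$ converges at every $\lambda\in(0,\pi]$. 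The partial sums are bounded by $C\,c_{1/\lambda}/\lambda$-type quantities, and this convergence is locally uniform on compacts of $(0,\pi]$.
\item The Abelian estimate from \cite{binghamGoldie} (Theorem 4.10.1 type) gives
$\sum_{h\ge1} L_1(h)h^{2d-1}\cos(h\lambda)\sim \Gamma(2d)\cos(d\pi)\,\lambda^{-2d}L_1(1/\lambda)$ as $\lambda\to0^+$.
\item Since $\lambda^{-2d}$ is integrable, $g\in L^1$, and $g\ge0$ near $0$.
\item I then identify $g$ as the spectral density. The Fejér means of the series converge to $g$ in $L^1$; this follows from the uniform bound $|g_n(\lambda)|\lesssim \lambda^{-2d}L_1(1/\lambda)$ on the partial sums, which comes from the same summation by parts, together with dominated convergence. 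The Fejér means are also the Fejér means of the spectral measure $F$. So $F$ is absolutely continuous with density $g$, and in particular $g\ge0$ a.e.
\item Divergence to $+\infty$ at $0$ is just $\sum\gamma(h)=+\infty$, since the coefficients are eventually positive.
\item Setting $L_2(\lambda)=g(\lambda)\lambda^{2d}$ and using the asymptotics above gives the first constant relation, with $L_2$ slowly varying.
\end{enumerate}

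\textbf{Direction (ii) $\Rightarrow$ (i).} Assume $f(\lambda)=L_2(\lambda)\lambda^{-2d}$ with $L_2$ quasi-monotone. Then
\[
\gamma(h)=2\int_0^\pi L_2(\lambda)\lambda^{-2d}\cos(h\lambda)\,d\lambda .
\]
I substitute $\lambda=u/h$ and integrate by parts using the bounded variation of $L_2$. Quasi-monotonicity controls the total variation term, which yields
\[
\gamma(h)\sim 2L_2(1/h)h^{2d-1}\int_0^\infty u^{-2d}\cos u\,du = 2\Gamma(1-2d)\sin(d\pi)\,L_2(1/h)h^{2d-1}.
\]
This gives the second constant relation and shows that $L_1(x):=\gamma(x)x^{1-2d}$ is slowly varying. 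I then need to show that $L_1$, extended for example by linear interpolation, is quasi-monotone. This follows from an analogous representation of the differences $\gamma(h)-\gamma(h+1)$ by the same integration by parts. Alternatively, I would invoke \cite{pipirasTaqqu2017}, Proposition 2.2.14, directly. Once (i) holds, direction (i) $\Rightarrow$ (ii) supplies the convergence statements, and the sum of the series coincides a.e. with $f$.

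\textbf{Main obstacle.} The hardest part is transferring quasi-monotonicity from $L_2$ to $L_1$, together with the uniform bound needed to identify the pointwise sum with the spectral density. For both, I would rely on the Tauberian machinery of \cite{binghamGoldie}, Section 4.10, and on Pipiras--Taqqu rather than redoing it.
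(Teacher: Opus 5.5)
\textbf{There is a genuine gap, and it is the step you flagged as the main obstacle: the transfer of quasi-monotonicity.} Both (i) and (ii) include quasi-monotonicity in the conclusion. In (i)$\Rightarrow$(ii) you stop at ``$L_{2}$ slowly varying''. In (ii)$\Rightarrow$(i), the claim that $L_{1}$ is quasi-monotone ``by an analogous representation of $\gamma(h)-\gamma(h+1)$'' is false.

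A jump of $f$ away from the origin puts an $O(1/h)$ oscillating term into $\gamma(h)$. That term is negligible next to $h^{2d-1}$, but it dominates the increments. Take $f(\lambda)=|\lambda|^{-2d}+\mathds{1}_{\{\pi/2\le|\lambda|\le\pi\}}$. Then $L_{2}$ is of bounded variation and constant near $0$, hence quasi-monotone, while
\[
\gamma(h)=2\Gamma(1-2d)\sin(d\pi)\,h^{2d-1}-\frac{2\sin(h\pi/2)}{h}+O(h^{-2}).
\]
So $|L_{1}(h+1)-L_{1}(h)|\sim 2h^{-2d}$, and $\int_{1}^{x}u^{\delta}|dL_{1}(u)|\gtrsim x^{\delta+1-2d}$ for every $L_{1}$ taking these values at the integers. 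Hence no admissible $L_{1}$ is quasi-monotone.

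The other direction fails too. Take the monotone, hence quasi-monotone, $L_{1}(x)=1/\lfloor\log_{2}x\rfloor$. Its small drops at $h=2^{j}$ add to the sum $g$ a term $\sum_{j}\beta_{j}\sin((2^{j}-\tfrac12)\lambda)/(2\sin(\lambda/2))$ with $\beta_{j}2^{j}\asymp 2^{2dj}/j^{2}\to\infty$. This Weierstrass-type term has unbounded variation on every subinterval of $(0,\pi]$: its localized Fourier transform at $2^{j}-\tfrac12$ is $\asymp\beta_{j}\gg 2^{-j}$. Since $g$ is continuous, no representative of the density is locally of bounded variation, so $L_{2}$ cannot be quasi-monotone.

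Falling back on Pipiras--Taqqu's Proposition~2.2.14 does not repair this, and that is also all the paper's one-line proof cites. That result gives two one-sided implications, each concluding the other condition with a merely slowly varying function and the stated asymptotic constant. So your argument, like the cited one, establishes only this: (i) implies Condition~\ref{condition:longMemorySpectralDensity} with $L_{2}$ slowly varying, and (ii) implies Condition~\ref{condition:longMemoryAutocovariances} with $L_{1}$ slowly varying, together with the two asymptotic relations. The quasi-monotone-to-quasi-monotone equivalence, as stated, does not hold.

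For the convergence claims under (ii), you cannot route through (i). Instead, a quasi-monotone $L_{2}$ makes $f$ locally of bounded variation on $(0,\pi]$, so the local Dirichlet--Jordan test gives convergence there. Divergence to $+\infty$ at $0$ follows from the eventual positivity of $\gamma$.

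The rest of your sketch is sound: the Abelian asymptotics with the correct constants, and the identification of the pointwise sum with the spectral density via $\sup_{n}|g_{n}(\lambda)|\lesssim\lambda^{-2d}L_{1}(1/\lambda)$ and $L^{1}$ convergence.
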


\begin{proof}[Proposition \ref{prop:equivalenceL1L2QuasiMonotone}]
This follows from Proposition 2.2.14 and Proposition A.2.2 in \cite{pipirasTaqqu2017}.
\end{proof}

\begin{remark}
\cite{giraitisKoulSurgailis} (Proposition 3.1.1), \cite{beran2013} (Theorem 1.3), and \cite{samorodnitsky2016} (Theorem 6.2.11), present a similar equivalence between Conditions \ref{condition:longMemoryAutocovariances} and \ref{condition:longMemorySpectralDensity}, but under the stronger assumption that $L_{1}$ and $L_{2}$ are locally of bounded variation and belong to the Zygmund class. The Zygmund class is the class of measurable functions $L$ that are positive on a neighbourhood of infinity and such that for all $\delta > 0$, $x \mapsto x^{\delta}L(x)$ is ultimately increasing and $x \mapsto x^{-\delta}L(x)$ is ultimately decreasing. By Theorem 1.5.5 in \cite{binghamGoldie}, the Zygmund class coincides with the class of normalised slowly varying functions, i.e., the class of functions $L$ which, for $x$ large enough, can be represented as
\begin{equation}
\label{eq:zygmundClassRepresentation}
L(x)=c\exp\hspace{-1.6pt}\left(\int_{b}^{x}\frac{\varepsilon(u)}{u}du\right)\hspace{-1.7pt},
\end{equation}
for some positive constants $b$ and $c$ and some measurable function $\varepsilon$ converging to 0 at infinity. Thanks to Theorem 1.5.5 and Corollary 2.7.4 in \cite{binghamGoldie}, the set of functions which are locally of bounded variation and belong to the Zygmund class is a proper subset of the class of quasi-monotone slowly varying functions.

Finally, a single asymptotic condition on the spectral density near zero is not sufficient to ensure that Condition \ref{condition:longMemorySpectralDensity} implies Condition \ref{condition:longMemoryAutocovariances}. We need an additional condition to guarantee that the spectral density is well-behaved everywhere except at 0, and this is why we assume that it is locally of bounded variation. For instance, in \cite{samorodnitsky2007}, Example 5.2 shows a process which satisfies Condition \ref{condition:longMemorySpectralDensity} but not Condition \ref{condition:longMemoryAutocovariances}, by allowing the spectral density to explode at the point $\pi$.
\end{remark}

\begin{remark}
According to Taqqu, the most general known condition to relate Conditions \ref{condition:longMemoryAutocovariances} and \ref{condition:longMemorySpectralDensity} is quasi-monotonicity. Still, to the best of our knowledge, there are no results proving that quasi-monotonicity is a necessary condition so that the conclusion of Proposition \ref{prop:equivalenceL1L2QuasiMonotone} holds. It may be possible to find weaker conditions than quasi-monotonicity ensuring that the spectral measure is absolutely continuous with an almost everywhere convergent Fourier series, while also making Conditions \ref{condition:longMemoryAutocovariances} and \ref{condition:longMemorySpectralDensity} equivalent.

In fact, if we set aside the connection between Conditions \ref{condition:longMemoryAutocovariances} and \ref{condition:longMemorySpectralDensity}, we can find weaker conditions on $L_{1}$ under which Condition \ref{condition:longMemoryAutocovariances} implies that the spectral measure is absolutely continuous, and the spectral density admits a Fourier representation everywhere except at 0. For instance, according to Theorem 6.2.11 in \cite{samorodnitsky2016}, if Condition \ref{condition:longMemoryAutocovariances} is satisfied with $L_{1}$ in the Zygmund class but not necessarily locally of bounded variation, then Condition \ref{condition:longMemorySpectralDensity} also holds, along with the last part of Proposition \ref{prop:equivalenceL1L2QuasiMonotone} regarding the Fourier series of the spectral density and the asymptotic equalities for both $L_{1}$ and $L_{2}$.

It may be possible to find even weaker conditions to ensure that the spectral density can be written as the sum of its Fourier series. Assuming that $L_{1}$ belongs to the Zygmund class means that it can be represented as in (\ref{eq:zygmundClassRepresentation}). In fact, all slowly varying functions admit such a representation, with the constant $c$ replaced by a measurable function converging at infinity to a positive real number (see \cite{binghamGoldie}, Theorem 1.3.1). Instead of taking $c$ to be constant in this representation, as in the Zygmund class, one could relax this assumption and allow $c$ to vary subject to appropriate asymptotic conditions. For instance, if we assume that Condition \ref{condition:longMemoryAutocovariances} is satisfied with $c$ such that $c(h+1) / c(h)=1+O\hspace{-1.6pt}\left(\frac{1}{h^{\delta}}\right)$, where $\delta \in (2d,1)$, we get $h(\gamma(h)-\gamma(h+1))=o(1)$. In \cite{stanojevic1992}, Stanojevic showed that this implies that the trigonometric series $\sum_{h \in \mathbb{Z}}\gamma(h)\e^{\mathrm{i}h\lambda}$ converges almost everywhere. However, further exploration---or potentially additional assumptions---may be required to ensure that the spectral density is equal to the sum of this series almost everywhere.

Similarly, stronger assumptions can be made on Condition \ref{condition:longMemorySpectralDensity} to guarantee that it implies that the Fourier series of the spectral density converges almost everywhere. For example, \cite{giraitisKoulSurgailis} (Definition 3.1.4) defines long-range dependence in the spectral domain using the same statement as Condition \ref{condition:longMemorySpectralDensity}, but with the additional requirement that the representative $f$ is locally bounded on $(0,\pi]$. This forces $L_{2}$ to be locally bounded on $(0,\pi]$. Then, choosing $p$ and $\delta$ such that $1 < p < \frac{1}{2d}$ and $2dp < \delta < 1$, we get $f^{p}(\lambda)=o\hspace{-1.6pt}\left(\rule{0pt}{9.1pt}\right.\hspace{-3.2pt}\lambda^{-\delta}\hspace{-3.2pt}\left.\rule{0pt}{9.1pt}\right)$ as $\lambda \to 0$. Since $\delta < 1$, it follows that $f$ is in $L^{p}$, with $p > 1$. By the Carleson--Hunt theorem \cite{carleson1966, hunt1967}, we can conclude that the Fourier series of $f$ converges almost~everywhere~to~$f$.
\end{remark}

Finally, we have restricted our attention to second-order definitions of long memory. Other approaches have been proposed that do not necessarily require second-order moments and therefore fall outside the scope of this article. For instance, in \cite{samorodnitsky2016}, Section 5.3, Samorodnitsky argues that ergodicity and mixing are natural concepts for describing the memory of a stationary process, although he points out that ``unusual'' phenomena have been observed in mixing Gaussian processes whose autocovariances decrease slowly enough. Moreover, as mentioned in \cite{giraitisKoulSurgailis}, Section 3.1, the $\alpha$-mixing condition is not easy to verify and may appear too restrictive compared to definitions based on the autocovariances or the spectral density of the process.

Another approach, known as distributional long memory, classifies a process as long-memory if its partial sum of the first $\lfloor{n\tau}\rfloor$ terms, normalized by a proper constant (typically a power of $n$), converges in the sense of finite-dimensional distributions to a continuous-time process $J({\tau})$ whose increments are not independent. The concept of distributional long memory originates in Cox \cite{cox1984}, and was formalized by Dehling and Philipp \cite{dehling2002}. Unlike second-order definitions, this approach does not require the existence of moments, making it applicable to processes with infinite variance \cite{leipusRecentDevelopments}.

In 2016, Samorodnitsky introduced the notion of phase transition to determine when a process can be considered long-range dependent \cite{samorodnitsky2016}. In this framework, long memory is expressed as a deviation from the behavior of an i.i.d. sequence with the same (or similar) marginal distribution as the stationary process under consideration, measured through some functional $\phi$ (for instance the sequence of partial sums or the sequence of partial maxima).

\section*{Acknowledgements}

The author is grateful to their PhD supervisors, James Ridgway (Capital Fund Management) and Pierre Alquier (ESSEC Business School), for their guidance and support. This work was supported by Lux Horizon Technologies.

\newpage

\printbibliography

\appendix

\section{Appendix: Technical Proofs}
\label{section:technicalProofs}

\subsection{Proof of Lemma \ref{lemma:HardyTrigonometricPolynomials}}
\label{subsection:appendixHardy}

For the sake of readability, some lemmas used in the following proof are stated and proved at the end of Subsection \ref{subsection:appendixHardy}.

We start by introducing some notations that will be useful throughout the proof. Take $n \in \mathbb{N}$ such that $n \ge 50$. We put $x_{n}$ the integer defined as $$x_{n}=\left\lfloor\frac{4n+1}{4}\hspace{-1.6pt}\left(1-\frac{1}{{(\log(n))}^{\frac{1}{6}}}\right)\right\rfloor\hspace{-1.7pt}.$$ It can be easily shown that $n$ is big enough to ensure that $x_{n} \ge 2$.

For all $l=1,\dotsc,x_{n}$, we put $$A_{l}^{n}=\frac{4\pi{l}}{4n+1}.$$ We also consider some integers $m_{1}^{n},\dotsc,m_{x_{n}}^{n}$ such that $m_{1}^{n} \ge n^{4}$, $m_{l+1}^{n} > 2m_{l}^{n}$ for all $l=1,\dotsc,x_{n}-1$, and $2m_{l}^{n}+1$ is a multiple of $4n+1$ for all $l=1,\dotsc,x_{n}$.

Now, we define the polynomial kernel $f_{n}$ as an average of Fejér kernels: $$f_{n}(\theta)=\frac{1}{x_{n}}\sum_{l=1}^{x_{n}}F_{m_{l}^{n}}(\theta-A_{l}^{n}).$$ The function $f_{n}$ is clearly non-negative with constant term $1/2$. To obtain an even polynomial function, we then define $\varphi_{n}$ as $$\varphi_{n}(\theta)=\frac{f_{n}(\theta)+f_{n}(-\theta)}{2}.$$ The sequence $(\varphi_{n})_{n \in \mathbb{N}}$ is the one for which we will establish a lower bound on the modulus of the symmetric partial Fourier sums that diverges to $+\infty$.

Finally, for all $j=1,\dotsc,x_{n}-1$, we define the interval $J_{j}^{n}$ as $$J_{j}^{n}=\left]A_{j}^{n}+\frac{1}{n^{2}},A_{j+1}^{n}-\frac{1}{n^2}\right[.$$ Notice that, since $n \ge 1$, none of the intervals $J_{j}^{n}$s are empty. We then put $$\Omega_{n}=\bigcup_{j=1}^{\left\lfloor{x_{n}-\sqrt{n}}\right\rfloor}\left\{\theta \in J_{j}^{n}~\middle|~\theta \ge \frac{1}{{(\log(n))}^{\frac{1}{6}}}~\text{and}~\left|\sin\hspace{-1.6pt}\left(\hspace{-1.6pt}\left(m_{j}^{n}+\frac{1}{2}\right)\hspace{-1.6pt}\theta\right)\hspace{-1.6pt}\right| \ge \frac{1}{{(\log(n))}^{\frac{1}{6}}}\right\}\hspace{-1.7pt}.$$ Note that $n$ is large enough so that $\left\lfloor{x_{n}-\sqrt{n}}\right\rfloor \ge 1$. The additional conditions involving $(\log(n))^{1/6}$ in the definition of the set $\Omega_{n}$ are such that it may be empty for small $n$. However, as we shall see, for sufficiently large $n$, $\Omega_{n}$ becomes non-empty, and its measure even tends to $\pi$ as $n$ goes to infinity. This implies that $E_{n}=\Omega_{n} \cup (-\Omega_{n})$ asymptotically covers almost all of the interval $[-\pi,\pi]$. The lower bound on the modulus of the symmetric partial Fourier sums of $\varphi_{n}$ will be shown to hold for all $\theta \in E_{n}$, thus concluding the proof.

\begin{proof}[Lemma \ref{lemma:HardyTrigonometricPolynomials}]
The function $\varphi_{n}$ non-negative and even with constant term $1/2$. Moreover, it can be easily seen that $\varphi_{n}$ is of degree at most $m_{x_{n}}^{n}$, and the monomial term of degree $m_{x_{n}}^{n}$ of $\varphi_{n}$ evaluated at $\theta$ is equal to $$\frac{1}{2x_{n}}\left(1-\frac{m_{x_{n}}^{n}}{m_{x_{n}}^{n}+1}\right)\left(\cos(m_{x_{n}}^{n}(\theta-A_{x_{n}}^{n}))+\cos(m_{x_{n}}^{n}(\theta+A_{x_{n}}^{n}))\right)\hspace{-1.7pt},$$ which is also equal to $$\frac{1}{2x_{n}}\left(1-\frac{m_{x_{n}}^{n}}{m_{x_{n}}^{n}+1}\right)\cos(m_{x_{n}}^{n}A_{x_{n}}^{n})\left(\e^{\mathrm{i}m_{x_{n}}^{n}\theta}+\e^{-\mathrm{i}m_{x_{n}}^{n}\theta}\right)\hspace{-1.7pt}.$$ Hence, we see that $\varphi_{n}$ is a polynomial of degree $m_{x_{n}}^{n}$ if and only if $m_{x_{n}}^{n}A_{x_{n}}^{n} \not\equiv \frac{\pi}{2} \pmod{\pi}$. But since $2m_{x_{n}}^{n}+1$ is a multiple of $4n+1$, there exists $k \in \mathbb{N}^{*}$ such that $m_{x_{n}}^{n}=\frac{k}{2}(4n+1)-\frac{1}{2}$. We deduce that $$m_{x_{n}}^{n}A_{x_{n}}^{n}=\left(\frac{k}{2}(4n+1)-\frac{1}{2}\right)\frac{4\pi{x_{n}}}{4n+1}=2\pi{k}x_{n}-\frac{2\pi{x_{n}}}{4n+1}.$$ Thus, $m_{x_{n}}^{n}A_{x_{n}}^{n} \not\equiv \frac{\pi}{2} \pmod{\pi}$ if and only if $\frac{2\pi{x_{n}}}{4n+1} \not\equiv \frac{\pi}{2} \pmod{\pi}$. It is clear that $0 < x_{n} < \frac{4n+1}{4}$, so that $0 < \frac{2\pi{x_{n}}}{4n+1} < \frac{\pi}{2}$. This implies that $\frac{2\pi{x_{n}}}{4n+1} \not\equiv \frac{\pi}{2} \pmod{\pi}$, which shows that $\varphi_{n}$ has degree $m_{x_{n}}^{n}$.

Now, take $\theta \in \Omega_{n}$. There exists some $j \in \left\{1,\dotsc,\left\lfloor{x_{n}-\sqrt{n}}\right\rfloor\right\}$ such that $\theta \in J_{j}^{n}$ and
\begin{equation}
\label{eq:lowerBoundThetaAndSinMjTheta}
\theta \ge \frac{1}{{(\log(n))}^{\frac{1}{6}}},\,\text{and}~\left|\sin\hspace{-1.6pt}\left(\hspace{-1.6pt}\left(m_{j}^{n}+\frac{1}{2}\right)\hspace{-1.6pt}\theta\right)\hspace{-1.6pt}\right| \ge \frac{1}{{(\log(n))}^{\frac{1}{6}}}.
\end{equation}
We start by calculating $S_{m_{j}^{n}}(\varphi_{n},\theta)$. Define the function $g_{n}$ as $g_{n} \colon \theta \mapsto f_{n}(-\theta)$. For all $l \in \mathbb{N}$, we have $$\hat{g}_{n}(l)=\frac{1}{2\pi}\int_{-\pi}^{\pi}f_{n}(-\theta)\e^{-\mathrm{i}l\theta}d\theta=\frac{1}{2\pi}\int_{-\pi}^{\pi}f_{n}(\theta)\e^{-\mathrm{i}(-l)\theta}d\theta=\hat{f}_{n}(-l).$$ Using classical properties of Fourier coefficients, it follows that
\begin{flalign*}
S_{m_{j}^{n}}(\varphi_{n},\theta) &= \frac{1}{2}\sum_{l=-m_{j}^{n}}^{m_{j}^{n}}\left(\hat{f}_{n}(l)+\hat{g}_{n}(l)\right)\e^{\mathrm{i}l\theta} \\
&= \frac{1}{2}\sum_{l=-m_{j}^{n}}^{m_{j}^{n}}\hat{f}_{n}(l)\e^{\mathrm{i}l\theta}+\frac{1}{2}\sum_{l=-m_{j}^{n}}^{m_{j}^{n}}\hat{f}_{n}(-l)\e^{\mathrm{i}l\theta} \\
&= \frac{1}{2}\sum_{l=-m_{j}^{n}}^{m_{j}^{n}}\hat{f}_{n}(l)\e^{\mathrm{i}l\theta}+\frac{1}{2}\sum_{l=-m_{j}^{n}}^{m_{j}^{n}}\hat{f}_{n}(l)\e^{\mathrm{i}l(-\theta)} \\
&= \frac{1}{2}\left(S_{m_{j}^{n}}(f_{n},\theta)+S_{m_{j}^{n}}(f_{n},-\theta)\right)\hspace{-1.7pt}. \vphantom{\sum_{l=-m_{j}^{n}}^{m_{j}^{n}}\hat{f}_{n}(l)\e^{\mathrm{i}l\theta}}
\end{flalign*}
Since $\theta$ belongs to $J_{j}^{n}$, Lemma \ref{lemma:upperBoundPartialFourierSumFn}, combined with the fact that $|a+b| \ge |a|-|b|$ for any $a,b \in \mathbb{R}$, ensures that we have the following inequality, where $C$ has been defined in Lemma~\ref{lemma:propertiesFejerKernel}:
\begin{equation}
\label{eq:lowerBoundDirichletKernels}
\left|S_{m_{j}^{n}}(\varphi_{n},\theta)\right| \ge \frac{1}{2}\left|\frac{1}{x_{n}}\sum_{l=j+1}^{x_{n}}\frac{m_{l}^{n}-m_{j}^{n}}{m_{l}^{n}+1}D_{m_{j}^{n}}(\theta-A_{l}^{n})+\frac{1}{x_{n}}\sum_{l=j+1}^{x_{n}}\frac{m_{l}^{n}-m_{j}^{n}}{m_{l}^{n}+1}D_{m_{j}^{n}}(\theta+A_{l}^{n})\right|-C.
\end{equation}
Moreover, we have assumed that $2m_{j}^{n}+1$ is a multiple of $4n+1$. Hence, for all $l=1,\dotsc,x_{n}$, we obtain $$\left(m_{j}^{n}+\frac{1}{2}\right)\hspace{-1.6pt}A_{l}^{n}=\left(2m_{j}^{n}+1\right)\frac{1}{2}\frac{4\pi{l}}{4n+1}=2\pi{l}\frac{2m_{j}^{n}+1}{4n+1},$$ from which follows that $$\left(m_{j}^{n}+\frac{1}{2}\right)\hspace{-1.6pt}A_{l}^{n} \equiv 0 \pmod{2\pi}.$$ We want to use this last result to simplify the numerators of the Dirichlet kernels inside (\ref{eq:lowerBoundDirichletKernels}). However, we first need to check that the denominators of these Dirichlet kernels are not equal to zero. We have $\theta < A_{j+1}^{n} \le A_{l}^{n}$ because $l \ge j+1$, and $A_{l}^{n}-\theta \le A_{l}^{n} \le \pi$. This means that $-\pi < \frac{1}{2}(\theta-A_{l}^{n}) < 0$, so that $\sin\hspace{-1.6pt}\left(\frac{1}{2}(\theta-A_{l}^{n})\right) \ne 0$. We also clearly have $0 < \theta < A_{j+1}^{n} \le A_{x_{n}}^{n} \le \pi$, and $0 < \frac{1}{2}(\theta+A_{l}^{n}) < \pi$, so that $\sin\hspace{-1.6pt}\left(\frac{1}{2}(\theta+A_{l}^{n})\right) \ne 0$. Thus, the sine function being $2\pi$-periodic, we can rewrite the Dirichlet kernels as $$D_{m_{j}^{n}}(\theta-A_{l}^{n})=\frac{\sin\hspace{-1.6pt}\left(\left(m_{j}^{n}+\frac{1}{2}\right)(\theta-A_{l}^{n})\right)}{2\sin\hspace{-1.6pt}\left(\frac{1}{2}(\theta-A_{l}^{n})\right)}=\frac{\sin\hspace{-1.6pt}\left(\left(m_{j}^{n}+\frac{1}{2}\right)\theta\right)}{2\sin\hspace{-1.6pt}\left(\frac{1}{2}(\theta-A_{l}^{n})\right)},$$ and $$D_{m_{j}^{n}}(\theta+A_{l}^{n})=\frac{\sin\hspace{-1.6pt}\left(\left(m_{j}^{n}+\frac{1}{2}\right)(\theta+A_{l}^{n})\right)}{2\sin\hspace{-1.6pt}\left(\frac{1}{2}(\theta+A_{l}^{n})\right)}=\frac{\sin\hspace{-1.6pt}\left(\left(m_{j}^{n}+\frac{1}{2}\right)\theta\right)}{2\sin\hspace{-1.6pt}\left(\frac{1}{2}(\theta+A_{l}^{n})\right)}.$$ Plugging these values of the Dirichlet kernels inside (\ref{eq:lowerBoundDirichletKernels}), we deduce that
\begin{flalign}
\left|S_{m_{j}^{n}}(\varphi_{n},\theta)\right| & \ge \frac{1}{4}\left|\sin\hspace{-1.6pt}\left(\hspace{-1.6pt}\left(m_{j}^{n}+\frac{1}{2}\right)\hspace{-1.6pt}\theta\right)\hspace{-1.6pt}\right|\left|\frac{1}{x_{n}}\sum_{l=j+1}^{x_{n}}\frac{m_{l}^{n}-m_{j}^{n}}{m_{l}^{n}+1}\hspace{-1.6pt}\left(\frac{1}{\sin\hspace{-1.6pt}\left(\frac{1}{2}(\theta-A_{l}^{n})\right)}+\frac{1}{\sin\hspace{-1.6pt}\left(\frac{1}{2}(\theta+A_{l}^{n})\right)}\right)\hspace{-1.6pt}\right|-C \nonumber \\
&= \frac{1}{4}\left|\sin\hspace{-1.6pt}\left(\hspace{-1.6pt}\left(m_{j}^{n}+\frac{1}{2}\right)\hspace{-1.6pt}\theta\right)\hspace{-1.6pt}\right|\left|\frac{1}{x_{n}}\sum_{l=j+1}^{x_{n}}\frac{m_{l}^{n}-m_{j}^{n}}{m_{l}^{n}+1}\hspace{-1.6pt}\left(\frac{1}{\sin\hspace{-1.6pt}\left(\frac{1}{2}(A_{l}^{n}-\theta)\right)}-\frac{1}{\sin\hspace{-1.6pt}\left(\frac{1}{2}(A_{l}^{n}+\theta)\right)}\right)\hspace{-1.6pt}\right|-C. \label{eq:lowerBoundAbsoluteSinus}
\end{flalign}
A sum-to-product identity tells us that $\sin(a)-\sin(b)=2\cos\hspace{-1.6pt}\left(\frac{a+b}{2}\right)\sin\hspace{-1.6pt}\left(\frac{a-b}{2}\right)$, for all $a,b$. We thus~have
\begin{equation}
\label{eq:sinSumToProduct}
\sin\hspace{-1.6pt}\left(\frac{1}{2}(A_{l}^{n}+\theta)\right)-\sin\hspace{-1.6pt}\left(\frac{1}{2}(A_{l}^{n}-\theta)\right)=2\cos\hspace{-1.6pt}\left(\frac{A_{l}^{n}}{2}\right)\sin\hspace{-1.6pt}\left(\frac{\theta}{2}\right)\hspace{-1.7pt},
\end{equation}
for all $l=j+1,\dotsc,x_{n}$. Since $0 \le A_{l}^{n} \le A_{x_{n}}^{n} \le \pi$, we have $0 \le \frac{A_{l}^{n}}{2} \le \frac{\pi}{2}$, which implies that $\cos\hspace{-1.6pt}\left(\frac{A_{l}^{n}}{2}\right) \ge 0$. In addition, $\theta \in J_{j}^{n}$ so that $0 \le A_{j}^{n} \le \theta \le A_{j+1}^{n} \le A_{x_{n}}^{n} \le \pi$. Hence, we have $0 \le \frac{\theta}{2} \le \frac{\pi}{2}$, which implies that $\sin\hspace{-1.6pt}\left(\frac{\theta}{2}\right) \ge 0$. Thus, we deduce from equality (\ref{eq:sinSumToProduct}) that for all $l=j+1,\dotsc,x_{n}$, we have the inequality
\begin{equation}
\label{eq:sinusInequality}
0 < \sin\hspace{-1.6pt}\left(\frac{1}{2}(A_{l}^{n}-\theta)\right) \le \sin\hspace{-1.6pt}\left(\frac{1}{2}(A_{l}^{n}+\theta)\right)\hspace{-1.7pt}.
\end{equation}
Indeed, we have $0 < \frac{1}{2}(A_{l}^{n}-\theta) \le \frac{\pi}{2}$, so that $\sin\hspace{-1.6pt}\left(\frac{1}{2}(A_{l}^{n}-\theta)\right) > 0$.

As a consequence of inequality (\ref{eq:sinusInequality}), which holds for all $l=j+1,\dotsc,x_{n}$, the terms inside the sum in the lower bound of inequality (\ref{eq:lowerBoundAbsoluteSinus}) are non-negative, and the whole sum is itself non-negative. We get
\vspace{-\parskip}
\begin{equation}
\label{eq:lowerBoundDifferenceOfSinus}
\left|S_{m_{j}^{n}}(\varphi_{n},\theta)\right| \ge \left|\sin\hspace{-1.6pt}\left(\hspace{-1.6pt}\left(m_{j}^{n}+\frac{1}{2}\right)\hspace{-1.6pt}\theta\right)\hspace{-1.6pt}\right|\frac{1}{4x_{n}}\sum_{l=j+1}^{x_{n}}\frac{m_{l}^{n}-m_{j}^{n}}{m_{l}^{n}+1}\hspace{-1.6pt}\left(\frac{1}{\sin\hspace{-1.6pt}\left(\frac{1}{2}(A_{l}^{n}-\theta)\right)}-\frac{1}{\sin\hspace{-1.6pt}\left(\frac{1}{2}(A_{l}^{n}+\theta)\right)}\right)-C.
\end{equation}
Now, put $l \in \left\{j+1,\dotsc,x_{n}\right\}$. We have
\begin{flalign}
\frac{1}{\sin\hspace{-1.6pt}\left(\frac{1}{2}(A_{l}^{n}-\theta)\right)}-\frac{1}{\sin\hspace{-1.6pt}\left(\frac{1}{2}(A_{l}^{n}+\theta)\right)} &= \frac{\sin\hspace{-1.6pt}\left(\frac{1}{2}(A_{l}^{n}+\theta)\right)-\sin\hspace{-1.6pt}\left(\frac{1}{2}(A_{l}^{n}-\theta)\right)}{\sin\hspace{-1.6pt}\left(\frac{1}{2}(A_{l}^{n}-\theta)\right)\sin\hspace{-1.6pt}\left(\frac{1}{2}(A_{l}^{n}+\theta)\right)} \nonumber \\
&= \frac{2\cos\hspace{-1.6pt}\left(\frac{A_{l}^{n}}{2}\right)\sin\hspace{-1.6pt}\left(\frac{\theta}{2}\right)}{\sin\hspace{-1.6pt}\left(\frac{1}{2}(A_{l}^{n}-\theta)\right)\sin\hspace{-1.6pt}\left(\frac{1}{2}(A_{l}^{n}+\theta)\right)}. \label{eq:equalityDifferenceSinus}
\end{flalign}
We have already seen that $\cos\hspace{-1.6pt}\left(\frac{A_{l}^{n}}{2}\right) \ge 0$, $\sin\hspace{-1.6pt}\left(\frac{\theta}{2}\right) \ge 0$, $\sin\hspace{-1.6pt}\left(\frac{1}{2}(A_{l}^{n}-\theta)\right) > 0$, and $\sin\hspace{-1.6pt}\left(\frac{1}{2}(A_{l}^{n}+\theta)\right) > 0$. Since we also have $\sin\hspace{-1.6pt}\left(\frac{1}{2}(A_{l}^{n}+\theta)\right) \le 1$, we deduce from equality (\ref{eq:equalityDifferenceSinus}) that
\begin{equation}
\label{eq:inequalityDifferenceSinus1}
\frac{1}{\sin\hspace{-1.6pt}\left(\frac{1}{2}(A_{l}^{n}-\theta)\right)}-\frac{1}{\sin\hspace{-1.6pt}\left(\frac{1}{2}(A_{l}^{n}+\theta)\right)} \ge \frac{2\cos\hspace{-1.6pt}\left(\frac{A_{l}^{n}}{2}\right)\sin\hspace{-1.6pt}\left(\frac{\theta}{2}\right)}{\sin\hspace{-1.6pt}\left(\frac{1}{2}(A_{l}^{n}-\theta)\right)}.
\end{equation}
Furthermore, notice that
\begin{equation}
\label{eq:inequalityAxn}
A_{x_{n}}^{n}=\frac{4\pi{x_{n}}}{4n+1}=\frac{4\pi}{4n+1}\left\lfloor\frac{4n+1}{4}\hspace{-1.6pt}\left(1-\frac{1}{{(\log(n))}^{\frac{1}{6}}}\right)\right\rfloor \le \pi\hspace{-1.6pt}\left(1-\frac{1}{{(\log(n))}^{\frac{1}{6}}}\right)\hspace{-1.7pt}.
\end{equation}
Because $A_{l}^{n} \le A_{x_{n}}^{n}$, inequality (\ref{eq:inequalityAxn}) implies that $$0 \le \frac{A_{l}^{n}}{2} \le \frac{\pi}{2}\hspace{-1.6pt}\left(1-\frac{1}{{(\log(n))}^{\frac{1}{6}}}\right) \le \frac{\pi}{2}.$$ The cosine function being non-increasing on $\left[0,\frac{\pi}{2}\right]$, it follows that
\begin{equation}
\label{eq:lowerBoundCosAl}
\cos\hspace{-1.6pt}\left(\frac{A_{l}^{n}}{2}\right) \ge \cos\hspace{-1.6pt}\left(\frac{\pi}{2}\hspace{-1.6pt}\left(1-\frac{1}{{(\log(n))}^{\frac{1}{6}}}\right)\right)\hspace{-1.7pt}.
\end{equation}
But for all $x \in \left[0,\frac{\pi}{2}\right]$, we have $\cos(x) \ge \frac{2}{\pi}\left(\frac{\pi}{2}-x\right)$, which, combined with inequality (\ref{eq:lowerBoundCosAl}), ensures~that $$\cos\hspace{-1.6pt}\left(\frac{A_{l}^{n}}{2}\right) \ge \frac{1}{{(\log(n))}^{\frac{1}{6}}}.$$ Consequently, inequality (\ref{eq:inequalityDifferenceSinus1}) becomes
\begin{equation}
\label{eq:inequalityDifferenceSinus2}
\frac{1}{\sin\hspace{-1.6pt}\left(\frac{1}{2}(A_{l}^{n}-\theta)\right)}-\frac{1}{\sin\hspace{-1.6pt}\left(\frac{1}{2}(A_{l}^{n}+\theta)\right)} \ge \frac{2}{{(\log(n))}^{\frac{1}{6}}}\frac{\sin\hspace{-1.6pt}\left(\frac{\theta}{2}\right)}{\sin\hspace{-1.6pt}\left(\frac{1}{2}(A_{l}^{n}-\theta)\right)}.
\end{equation}
Additionally, as per condition (\ref{eq:lowerBoundThetaAndSinMjTheta}), we have chosen $\theta$ such that $$0 \le \frac{1}{2{(\log(n))}^{\frac{1}{6}}} \le \frac{\theta}{2} \le \frac{\pi}{2}.$$ The sine function being non-decreasing on $\left[0,\frac{\pi}{2}\right]$, it follows that
\begin{equation}
\label{eq:lowerBoundSinTheta}
\sin\hspace{-1.6pt}\left(\frac{\theta}{2}\right) \ge \sin\hspace{-1.6pt}\left(\frac{1}{2{(\log(n))}^{\frac{1}{6}}}\right)\hspace{-1.7pt}.
\end{equation}
But for all $x \in \left[0,\frac{\pi}{2}\right]$, we have $\sin(x) \ge \frac{2}{\pi}x$, which, combined with inequality (\ref{eq:lowerBoundSinTheta}), ensures that $$\sin\hspace{-1.6pt}\left(\frac{\theta}{2}\right) \ge \frac{1}{\pi{(\log(n))}^{\frac{1}{6}}}.$$ Consequently, inequality (\ref{eq:inequalityDifferenceSinus2}) becomes
\begin{equation}
\label{eq:inequalityDifferenceSinus3}
\frac{1}{\sin\hspace{-1.6pt}\left(\frac{1}{2}(A_{l}^{n}-\theta)\right)}-\frac{1}{\sin\hspace{-1.6pt}\left(\frac{1}{2}(A_{l}^{n}+\theta)\right)} \ge \frac{2}{\pi}\frac{1}{{(\log(n))}^{\frac{2}{6}}}\frac{1}{\sin\hspace{-1.6pt}\left(\frac{1}{2}(A_{l}^{n}-\theta)\right)}.
\end{equation}
Finally, for all $x \ge 0$, $\sin(x) \le x$. Since $0 < \frac{1}{2}(A_{l}^{n}-\theta) \le \frac{\pi}{2}$ and $\frac{1}{2}(A_{l}^{n}-\theta) \le \frac{1}{2}(A_{l}^{n}-A_{j}^{n})$, we obtain $$\frac{1}{\sin\hspace{-1.6pt}\left(\frac{1}{2}(A_{l}^{n}-\theta)\right)} \ge \frac{1}{\frac{1}{2}(A_{l}^{n}-\theta)} \ge \frac{1}{\frac{1}{2}(A_{l}^{n}-A_{j}^{n})}=\frac{4n+1}{2\pi}\frac{1}{l-j}.$$ As a result, we deduce from inequality (\ref{eq:inequalityDifferenceSinus3}) that
\begin{equation}
\label{eq:inequalityDifferenceSinus4}
\frac{1}{\sin\hspace{-1.6pt}\left(\frac{1}{2}(A_{l}^{n}-\theta)\right)}-\frac{1}{\sin\hspace{-1.6pt}\left(\frac{1}{2}(A_{l}^{n}+\theta)\right)} \ge \frac{4n+1}{\pi^{2}}\frac{1}{{(\log(n))}^{\frac{2}{6}}}\frac{1}{l-j}.
\end{equation}
Inequality (\ref{eq:inequalityDifferenceSinus4}) holds for all $l=j+1,\dotsc,x_{n}$, and plugging this lower bound into inequality (\ref{eq:lowerBoundDifferenceOfSinus}),\,we~get
\begin{equation}
\label{eq:lowerBoundWithoutSinus}
\left|S_{m_{j}^{n}}(\varphi_{n},\theta)\right| \ge \left|\sin\hspace{-1.6pt}\left(\hspace{-1.6pt}\left(m_{j}^{n}+\frac{1}{2}\right)\hspace{-1.6pt}\theta\right)\hspace{-1.6pt}\right|\frac{1}{4\pi^{2}}\frac{4n+1}{x_{n}}\frac{1}{{(\log(n))}^{\frac{2}{6}}}\sum_{l=j+1}^{x_{n}}\frac{m_{l}^{n}-m_{j}^{n}}{m_{l}^{n}+1}\frac{1}{l-j}-C.
\end{equation}
Moreover, for all $l=j+1,\dotsc,x_{n}$, $m_{l}^{n} > 2m_{l-1}^{n} \ge 2m_{j}^{n}$, so that $m_{l}^{n} \ge 2m_{j}^{n}+1$, and $$\frac{m_{l}^{n}-m_{j}^{n}}{m_{l}^{n}+1} \ge \frac{1}{2}.$$ Hence, inequality (\ref{eq:lowerBoundWithoutSinus}) becomes
\begin{equation}
\label{eq:lowerBoundWithoutMj}
\left|S_{m_{j}^{n}}(\varphi_{n},\theta)\right| \ge \left|\sin\hspace{-1.6pt}\left(\hspace{-1.6pt}\left(m_{j}^{n}+\frac{1}{2}\right)\hspace{-1.6pt}\theta\right)\hspace{-1.6pt}\right|\frac{1}{8\pi^{2}}\frac{4n+1}{x_{n}}\frac{1}{{(\log(n))}^{\frac{2}{6}}}\sum_{l=j+1}^{x_{n}}\frac{1}{l-j}-C.
\end{equation}
But we have chosen $j$ such that $j \le x_{n}-\sqrt{n}$, so that $x_{n}-j \ge \sqrt{n}$, and we get
\begin{equation}
\label{eq:lowerBoundHarmonicSum}
\sum_{l=j+1}^{x_{n}}\frac{1}{l-j}=\sum_{l=1}^{x_{n}-j}\frac{1}{l} \ge \log(x_{n}-j) \ge \frac{1}{2}\log(n).
\end{equation}
Combined with the fact that $\theta$ satisfies condition (\ref{eq:lowerBoundThetaAndSinMjTheta}), inequalities (\ref{eq:lowerBoundWithoutMj}) and (\ref{eq:lowerBoundHarmonicSum}) yield
\begin{equation}
\label{eq:lowerBoundWithoutSum}
\left|S_{m_{j}^{n}}(\varphi_{n},\theta)\right| \ge \frac{1}{16\pi^{2}}\frac{4n+1}{x_{n}}{(\log(n))}^{\frac{1}{2}}-C.
\end{equation}
Denoting $$M_{n}=\frac{1}{16\pi^{2}}\frac{4n+1}{x_{n}}{(\log(n))}^{\frac{1}{2}}-C,$$ inequality (\ref{eq:lowerBoundWithoutSum}) tells us that $\left|S_{m_{j}^{n}}(\varphi_{n},\theta)\right| \ge M_{n}$. It can be easily shown that $x_{n} \sim (4n+1)/4$ as $n \to +\infty$, from which follows that that $M_{n} \rightarrow +\infty$ as $n$ goes to $+\infty$.

All in all, we have shown that for all $\theta \in \Omega_{n}$, we have $\left|S_{m_{j}^{n}}(\varphi_{n},\theta)\right| \ge M_{n}$, where $j$ is such that $\theta$ belongs to $J_{j}^{n}$. However, $\varphi_{n}$ is an even function, which implies that $S_{m_{j}^{n}}(\varphi_{n},-\theta)=S_{m_{j}^{n}}(\varphi_{n},\theta)$ for all $\theta \in [0,\pi]$. Hence, we can assert that for all $\theta \in \Omega_{n} \cup (-\Omega_{n})$, we have $\left|S_{m_{j}^{n}}(\varphi_{n},\theta)\right| \ge M_{n}$, where $j$ is such that $|\theta| \in J_{j}^{n}$. Thus, denoting $E_{n}=\Omega_{n} \cup (-\Omega_{n})$, it remains to show that $m(E_{n}) \rightarrow 2\pi$ as $n \to +\infty$. Since $\Omega_{n}$ and $-\Omega_{n}$ are disjoint and have same measure, it is sufficient to show that $m(\Omega_{n}) \rightarrow \pi$ as $n \to +\infty$.

First, notice that
\begin{equation}
\label{eq:equalityOmeganSigman}
\Omega_{n}=\left[\frac{1}{{(\log(n))}^{\frac{1}{6}}},\pi\right] \cap \Sigma_{n},
\end{equation}
where $$\Sigma_{n}=\bigcup_{j=1}^{\left\lfloor{x_{n}-\sqrt{n}}\right\rfloor}\left\{x \in J_{j}^{n}~\middle|~\left|\sin\hspace{-1.6pt}\left(\hspace{-1.6pt}\left(m_{j}^{n}+\frac{1}{2}\right)\hspace{-1.6pt}x\right)\hspace{-1.6pt}\right| \ge \frac{1}{{(\log(n))}^{\frac{1}{6}}}\right\}\hspace{-1.7pt}.$$ Since the $J_{j}^{n}$s are pairwise disjoint, we have 
\begin{flalign}
m(\Sigma_{n}) &= \sum_{j=1}^{\left\lfloor{x_{n}-\sqrt{n}}\right\rfloor}m\hspace{-1.6pt}\left(\left\{x \in J_{j}^{n}~\middle|~\left|\sin\hspace{-1.6pt}\left(\hspace{-1.6pt}\left(m_{j}^{n}+\frac{1}{2}\right)\hspace{-1.6pt}x\right)\hspace{-1.6pt}\right| \ge \frac{1}{{(\log(n))}^{\frac{1}{6}}}\right\}\right) \nonumber \\
&= \sum_{j=1}^{\left\lfloor{x_{n}-\sqrt{n}}\right\rfloor}\frac{1}{m_{j}^{n}+\frac{1}{2}}m\hspace{-1.6pt}\left(\Sigma_{n}^{(j)}\right)\hspace{-1.7pt}, \label{eq:equalitySigmanSumSigmanj}
\end{flalign}
where $$\Sigma_{n}^{(j)}=\left\{y \in \left]\left(m_{j}^{n}+\frac{1}{2}\right)\left(A_{j}^{n}+\frac{1}{n^{2}}\right)\hspace{-1.7pt},\left(m_{j}^{n}+\frac{1}{2}\right)\left(A_{j+1}^{n}-\frac{1}{n^2}\right)\right[~\middle|~\left|\sin(y)\right| \ge \frac{1}{{(\log(n))}^{\frac{1}{6}}}\right\}\hspace{-1.7pt}.$$ Now, denoting $$\tilde{\Sigma}_{n}^{(j,\mathsf{c})}=\left\{y \in \left]\left(m_{j}^{n}+\frac{1}{2}\right)A_{j}^{n},\left(m_{j}^{n}+\frac{1}{2}\right)A_{j+1}^{n}\right[~\middle|~\left|\sin(y)\right| < \frac{1}{{(\log(n))}^{\frac{1}{6}}}\right\}\hspace{-1.7pt},$$ we get $\Sigma_{n}^{(j,\mathsf{c})} \subset \tilde{\Sigma}_{n}^{(j,\mathsf{c})}$, where $\Sigma_{n}^{(j,\mathsf{c})}=\left(m_{j}^{n}+\frac{1}{2}\right)\hspace{-1.6pt}J_{j}^{n} \setminus \Sigma_{n}^{(j)}$ is the relative complement of $\Sigma_{n}^{(j)}$ in $\left(m_{j}^{n}+\frac{1}{2}\right)\hspace{-1.6pt}J_{j}^{n}$.

In addition, we have assumed that $2m_{j}^{n}+1$ is a multiple of $4n+1$, so that there is an integer $k \in \mathbb{N}^{*}$ such that $2m_{j}^{n}+1=(4n+1)k$. We then have $$\left]\left(m_{j}^{n}+\frac{1}{2}\right)A_{j}^{n},\left(m_{j}^{n}+\frac{1}{2}\right)A_{j+1}^{n}\right[=\left]2kj\pi,2kj\pi+2k\pi\right[,$$ from which we deduce that
\begin{flalign}
m\hspace{-1.6pt}\left(\tilde{\Sigma}_{n}^{(j,\mathsf{c})}\right) &= \sum_{l=0}^{2k-1}m\hspace{-1.6pt}\left(\left\{y \in \left]2kj\pi+l\pi,2kj\pi+(l+1)\pi\right[~\middle|~\left|\sin(y)\right| < \frac{1}{{(\log(n))}^{\frac{1}{6}}}\right\}\right) \nonumber \\
&= 2k\,m\hspace{-1.6pt}\left(\left\{y \in \left]0,\pi\right[~\middle|~\sin(y) < \frac{1}{{(\log(n))}^{\frac{1}{6}}}\right\}\right) \label{eq:equalityMeasureTildeSigman2Pi} \\
&= 4k\,m\hspace{-1.6pt}\left(\left\{y \in \left]0,\frac{\pi}{2}\right[~\middle|~\sin(y) < \frac{1}{{(\log(n))}^{\frac{1}{6}}}\right\}\right)\hspace{-1.7pt}. \label{eq:equalityMeasureTildeSigmanPiDivided2}
\end{flalign}
Note that equality (\ref{eq:equalityMeasureTildeSigman2Pi}) is a direct consequence of the $\pi$-periodicity of the function $y \mapsto \left|\sin(y)\right|$, while equality (\ref{eq:equalityMeasureTildeSigmanPiDivided2}) follows from the fact that the function $y \mapsto \left|\sin(y)\right|$ is both $\pi$-periodic and even.

Furthermore, since $n \ge 3$, we have ${(\log(n))}^{-\frac{1}{6}} \le 1$, and
\begin{flalign}
m\hspace{-1.6pt}\left(\left\{y \in \left]0,\frac{\pi}{2}\right[~\middle|~\sin(y) < \frac{1}{{(\log(n))}^{\frac{1}{6}}}\right\}\right) &= m\hspace{-1.6pt}\left(\left\{y \in \left]0,\frac{\pi}{2}\right[~\middle|~y < \arcsin\hspace{-1.6pt}\left(\frac{1}{{(\log(n))}^{\frac{1}{6}}}\right)\right\}\right) \nonumber \\
&= \arcsin\hspace{-1.6pt}\left(\frac{1}{{(\log(n))}^{\frac{1}{6}}}\right)\hspace{-1.7pt}. \label{eq:equalityFromSinToArcsin}
\end{flalign}
But we have $\arcsin(x) \le \frac{\pi}{2}x$ for all $x \in [0,1]$, and equality (\ref{eq:equalityFromSinToArcsin}) implies that
\begin{equation}
\label{eq:upperBoundSetSin}
m\hspace{-1.6pt}\left(\left\{y \in \left]0,\frac{\pi}{2}\right[~\middle|~\sin(y) < \frac{1}{{(\log(n))}^{\frac{1}{6}}}\right\}\right) \le \frac{\pi}{2}\frac{1}{{(\log(n))}^{\frac{1}{6}}}.
\end{equation}
Inequality (\ref{eq:upperBoundSetSin}), together with equality (\ref{eq:equalityMeasureTildeSigmanPiDivided2}), yields $$m\hspace{-1.6pt}\left(\tilde{\Sigma}_{n}^{(j,\mathsf{c})}\right) \le \left(m_{j}^{n}+\frac{1}{2}\right)\frac{4\pi}{4n+1}\frac{1}{{(\log(n))}^{\frac{1}{6}}}.$$ Since $\Sigma_{n}^{(j,\mathsf{c})} \subset \tilde{\Sigma}_{n}^{(j,\mathsf{c})}$, we obtain
\begin{flalign*}
m\hspace{-1.6pt}\left(\Sigma_{n}^{(j)}\right) &= m\hspace{-1.6pt}\left(\left(m_{j}^{n}+\frac{1}{2}\right)\hspace{-1.6pt}J_{j}^{n}\right)-m\hspace{-1.6pt}\left(\Sigma_{n}^{(j,\mathsf{c})}\right) \vphantom{\left(\frac{4\pi}{4n+1}-\frac{2}{n^{2}}-\frac{4\pi}{4n+1}\frac{1}{{(\log(n))}^{\frac{1}{6}}}\right)} \\
& \ge m\hspace{-1.6pt}\left(\left(m_{j}^{n}+\frac{1}{2}\right)\hspace{-1.6pt}J_{j}^{n}\right)-m\hspace{-1.6pt}\left(\tilde{\Sigma}_{n}^{(j,\mathsf{c})}\right) \vphantom{\left(\frac{4\pi}{4n+1}-\frac{2}{n^{2}}-\frac{4\pi}{4n+1}\frac{1}{{(\log(n))}^{\frac{1}{6}}}\right)} \\
& \ge \left(m_{j}^{n}+\frac{1}{2}\right)\left(\frac{4\pi}{4n+1}-\frac{2}{n^{2}}-\frac{4\pi}{4n+1}\frac{1}{{(\log(n))}^{\frac{1}{6}}}\right)\hspace{-1.7pt}.
\end{flalign*}
Thus, it follows from equality (\ref{eq:equalitySigmanSumSigmanj}) that
\begin{equation}
\label{eq:squeezeMeasureSigman}
\left\lfloor{x_{n}-\sqrt{n}}\right\rfloor\left(\frac{4\pi}{4n+1}-\frac{2}{n^{2}}-\frac{4\pi}{4n+1}\frac{1}{{(\log(n))}^{\frac{1}{6}}}\right) \le m(\Sigma_{n}) \le \pi.
\end{equation}
Additionally, one can easily show that $\left\lfloor{x_{n}-\sqrt{n}}\right\rfloor \sim (4n+1)/4$ as $n \to +\infty$. Hence, the leftmost side of inequality (\ref{eq:squeezeMeasureSigman}) converges to $\pi$, and we get $m(\Sigma_{n}) \rightarrow \pi$ as $n \to +\infty$. From equality (\ref{eq:equalityOmeganSigman}), it is then straightforward to see that $m(\Omega_{n}) \rightarrow \pi$ as $n \to +\infty$.

In summary, for all $\theta \in E_{n}$, we have $\left|S_{p_{n}(\theta)}(\varphi_{n},\theta)\right| \ge M_{n}$, where $p_{n}(\theta)=m_{j}^{n}$ and $j$ is such that $|\theta| \in J_{j}^{n}$. Thus, we have $n \le p_{n}(\theta) \le q_{n}$ where $q_{n}=m_{x_{n}}^{n}$. We have also shown that $M_{n} \rightarrow +\infty$~and~$m(E_{n}) \rightarrow 2\pi$.

Finally, there exists an integer $n_{0} \ge 50$ such that for all $n \ge n_{0}$, we have $M_{n} > 0$. The conditions of Lemma \ref{lemma:HardyTrigonometricPolynomials} are then satisfied by the sequences $(\varphi_{n})_{n \ge n_{0}}$, $(M_{n})_{n \ge n_{0}}$, and $(E_{n})_{n \ge n_{0}}$. To obtain valid sequences with index set $\mathbb{N}$, we take $\varphi_{n}=\varphi_{n_{0}}$, $M_{n}=M_{n_{0}}$, and $E_{n}=E_{n_{0}}$, for every integer $n$ such that $n < n_{0}$. The proof is now complete.
\end{proof}

The following lemmas use the same notation as introduced at the beginning of this appendix.

\begin{lemma}
\label{lemma:FejerKernelsUniformlyBounded}
The Fejér kernels inside the formulas of $f_{n}(\theta)$ and $f_{n}(-\theta)$ are uniformly bounded inside the $J_{j}^{n}$s, in the sense that $$\forall \theta \in \bigcup_{j=1}^{x_{n}-1}J_{j}^{n},\,\forall k,l=1,\dotsc,x_{n},\,F_{m_{k}^{n}}(\theta-A_{l}^{n}) \le C~\text{and}~F_{m_{k}^{n}}(\theta+A_{l}^{n}) \le C,$$ where $C$ has been defined in Lemma \ref{lemma:propertiesFejerKernel}.
\end{lemma}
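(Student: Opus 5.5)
The plan is to reduce everything to the pointwise bound of Lemma \ref{lemma:propertiesFejerKernel}, $F_{m}(\phi) \le C/((m+1)\phi^{2})$ for $\phi \in (0,\pi]$, extended to all $\phi$ with $|\phi|$ reduced modulo $2\pi$ into $(0,\pi]$ by evenness and $2\pi$-periodicity of $F_{m}$. It therefore suffices to show two things: for every $\theta \in J_{j}^{n}$ and every $l$, the distance from $\theta \mp A_{l}^{n}$ to $2\pi\mathbb{Z}$ is at least $1/n^{2}$; and $(m_{k}^{n}+1)\cdot(1/n^{2})^{2} \ge 1$. Then $F_{m_{k}^{n}}(\theta \mp A_{l}^{n}) \le C/((m_{k}^{n}+1)n^{-4}) \le C$.

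The second point is immediate. By construction $m_{k}^{n} \ge m_{1}^{n} \ge n^{4}$, so $(m_{k}^{n}+1)n^{-4} \ge 1$ for all $k=1,\dotsc,x_{n}$.

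For the first point, I take $\theta \in J_{j}^{n}$, so that $A_{j}^{n}+1/n^{2} < \theta < A_{j+1}^{n}-1/n^{2}$. All the $A_{l}^{n}$ lie in $(0,\pi)$, using $A_{x_{n}}^{n} \le \pi(1-(\log n)^{-1/6}) < \pi$ from inequality (\ref{eq:inequalityAxn}).

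\textbf{Difference case.} The quantity $\theta - A_{l}^{n}$ lies in $(-\pi,\pi)$. If $l \le j$, then $\theta - A_{l}^{n} > A_{j}^{n} - A_{l}^{n} + 1/n^{2} \ge 1/n^{2}$. If $l \ge j+1$, then $A_{l}^{n}-\theta > 1/n^{2}$ similarly. Hence $|\theta-A_{l}^{n}| \in (1/n^{2},\pi)$, and the bound applies directly.

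\textbf{Sum case.} The quantity $\theta + A_{l}^{n}$ lies in $(1/n^{2}, 2\pi)$, since $\theta > 1/n^{2}$ and both terms are less than $\pi$. Its distance to $2\pi\mathbb{Z}$ is $\min(\theta+A_{l}^{n},\,2\pi-\theta-A_{l}^{n})$. The first term exceeds $1/n^{2}$. For the second, $2\pi - \theta - A_{l}^{n} > 2\pi - 2A_{x_{n}}^{n} \ge 2\pi(\log n)^{-1/6}$. I must check this is at least $1/n^{2}$, which holds for $n \ge 50$ since $(\log n)^{1/6}$ is tiny compared with $n^{2}$. Reducing modulo $2\pi$ and using evenness, $F_{m}(\theta+A_{l}^{n}) = F_{m}(\phi)$ with $\phi \in [1/n^{2},\pi]$, and the bound again gives at most $C$.

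The only mildly delicate step is this last one: making sure the Fejér bound is applied to an argument in $(0,\pi]$, via periodicity and evenness, and that the wrap-around near $2\pi$ is controlled. The margin $A_{x_{n}}^{n} < \pi(1-(\log n)^{-1/6})$ is exactly what handles it. Everything else is bookkeeping.
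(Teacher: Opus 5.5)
Your proposal is correct and follows the paper's proof essentially step for step. Like the paper, you reduce to the bound $F_{m}(\phi)\le C/((m+1)\phi^{2})$ on $(0,\pi]$ via evenness and $2\pi$-periodicity, show that $\theta\mp A_{l}^{n}$ stays at distance at least $1/n^{2}$ from $2\pi\mathbb{Z}$, and conclude with $m_{k}^{n}\ge n^{4}$. The only small difference is the wrap-around near $2\pi$. The paper uses the $1/n^{2}$ margin built into $J_{j}^{n}$, giving $\theta+A_{l}^{n}<2A_{x_{n}}^{n}-1/n^{2}\le 2\pi-1/n^{2}$, so the $(\log n)^{-1/6}$ margin of $A_{x_{n}}^{n}$ that you invoke is valid but not needed.
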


\begin{proof}[Lemma \ref{lemma:FejerKernelsUniformlyBounded}]
Put $j \in \left\{1,\dotsc,x_{n}-1\right\}$, take $\theta \in J_{j}^{n}$, and take $k,l \in \left\{1,\dotsc,x_{n}\right\}$. Then, we have $|\theta-A_{l}^{n}| \ge \frac{1}{n^{2}}$. Indeed,
\vspace{-\parskip}
\begin{itemize}[label=\upshape\textbullet]
\item if $l \le j$, then $A_{l}^{n} \le A_{j}^{n} < \theta-\frac{1}{n^{2}}$ since $\theta \in J_{j}^{n}$. Thus, $\theta-A_{l}^{n} > \frac{1}{n^{2}}$ and $|\theta-A_{l}^{n}| \ge \frac{1}{n^{2}}$,
\item if $l > j$, then $A_{l}^{n} \ge A_{j+1}^{n} > \theta+\frac{1}{n^{2}}$ since $\theta \in J_{j}^{n}$. Thus, $\theta-A_{l}^{n} < -\frac{1}{n^{2}}$ and $|\theta-A_{l}^{n}| \ge \frac{1}{n^{2}}$.
\end{itemize}
\vspace{-\parskip}
Moreover, we clearly have $0 < |\theta-A_{l}^{n}| \le \pi$. By Lemma \ref{lemma:propertiesFejerKernel} and because $m_{k}^{n} \ge m_{1}^{n} \ge n^{4}$, we obtain $$F_{m_{k}^{n}}(\theta-A_{l}^{n})=F_{m_{k}^{n}}(|\theta-A_{l}^{n}|) \le \frac{C}{(m_{k}^{n}+1){(\theta-A_{l}^{n})}^{2}} \le \frac{Cn^{4}}{m_{k}^{n}+1} \le C.$$ On the other hand, we have $\frac{1}{n^{2}} < \theta+A_{l}^{n} < 2\pi-\frac{1}{n^{2}}$. Indeed, $\theta+A_{l}^{n} > A_{j}^{n}+A_{l}^{n}+\frac{1}{n^{2}} \ge \frac{1}{n^{2}}$, and $\theta+A_{l}^{n} < A_{j+1}^{n}+A_{l}^{n}-\frac{1}{n^{2}} \le 2A_{{x}_{n}}^{n}-\frac{1}{n^{2}} \le 2\pi-\frac{1}{n^{2}}$. With the same reasoning as previously, it follows~that

\vspace{-\parskip}
\begin{itemize}[label=\upshape\textbullet]
\item if $0 < \theta+A_{l}^{n} \le \pi$, then by Lemma \ref{lemma:propertiesFejerKernel} and because $m_{k}^{n} \ge m_{1}^{n} \ge n^{4}$, we get $F_{m_{k}^{n}}(\theta+A_{l}^{n}) \le C$,
\item if $\pi < \theta+A_{l}^{n} < 2\pi$, then $0 < 2\pi-\theta-A_{l}^{n} \le \pi$. By Lemma \ref{lemma:propertiesFejerKernel} and because $m_{k}^{n} \ge n^{4}$, we get $$F_{m_{k}^{n}}(\theta+A_{l}^{n})=F_{m_{k}^{n}}(\theta+A_{l}^{n}-2\pi)=F_{m_{k}^{n}}(2\pi-\theta-A_{l}^{n}) \le \frac{C}{(m_{k}^{n}+1){(2\pi-\theta-A_{l}^{n})}^{2}} \le C.$$
\end{itemize}
In all cases, we get $F_{m_{k}^{n}}(\theta-A_{l}^{n}) \le C$ and $F_{m_{k}^{n}}(\theta+A_{l}^{n}) \le C$, which concludes the proof.
\end{proof}

\begin{lemma}
\label{lemma:equalityPartialFourierSumFn}
For all $j=1,\dotsc,x_{n}-1$ and for all $\theta \in \mathbb{R}$, we have $$S_{m_{j}^{n}}(f_{n},\theta)=\frac{1}{x_{n}}\sum_{l=1}^{j}F_{m_{l}^{n}}(\theta-A_{l}^{n})+\frac{1}{x_{n}}\sum_{l=j+1}^{x_{n}}\frac{m_{j}^{n}+1}{m_{l}^{n}+1}F_{m_{j}^{n}}(\theta-A_{l}^{n})+\frac{1}{x_{n}}\sum_{l=j+1}^{x_{n}}\frac{m_{l}^{n}-m_{j}^{n}}{m_{l}^{n}+1}D_{m_{j}^{n}}(\theta-A_{l}^{n}).$$
\end{lemma}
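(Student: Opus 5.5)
The identity is linear in $f_{n}$, so I will compute $S_{m_{j}^{n}}$ of each shifted Fejér kernel $F_{m_{l}^{n}}(\cdot-A_{l}^{n})$ separately, average over $l$, and then sort the terms according to whether $l\le j$ or $l>j$.

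\textbf{Fourier coefficients of one shifted kernel.} By the definition of the Fejér kernel,
$$F_{m}(\theta-A)=\frac{1}{2}\sum_{r=-m}^{m}\left(1-\frac{|r|}{m+1}\right)\e^{-\mathrm{i}rA}\e^{\mathrm{i}r\theta}.$$
So the $r$-th Fourier coefficient of $\theta\mapsto F_{m}(\theta-A)$ is $\frac{1}{2}\bigl(1-\frac{|r|}{m+1}\bigr)\e^{-\mathrm{i}rA}$ for $|r|\le m$, and $0$ otherwise.

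\textbf{Case $l\le j$.} Then $m_{l}^{n}\le m_{j}^{n}$, because the sequence $(m_{l}^{n})_{l}$ is increasing. Hence the truncation at $m_{j}^{n}$ keeps the whole polynomial:
$$S_{m_{j}^{n}}\bigl(F_{m_{l}^{n}}(\cdot-A_{l}^{n}),\theta\bigr)=F_{m_{l}^{n}}(\theta-A_{l}^{n}).$$
This gives the first sum in the statement.

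\textbf{Case $l>j$.} Now $m_{l}^{n}>m_{j}^{n}$, and writing $M=m_{l}^{n}$, $m=m_{j}^{n}$, the truncation gives
$$\frac{1}{2}\sum_{|r|\le m}\left(1-\frac{|r|}{M+1}\right)\e^{\mathrm{i}r(\theta-A_{l}^{n})}.$$
The key algebraic step is to split the coefficient as
$$1-\frac{|r|}{M+1}=\frac{m+1}{M+1}\left(1-\frac{|r|}{m+1}\right)+\frac{M-m}{M+1}.$$
This is checked directly: the right-hand side equals $\frac{m+1-|r|+M-m}{M+1}=\frac{M+1-|r|}{M+1}$. Substituting, the first piece sums to $\frac{m+1}{M+1}F_{m}(\theta-A_{l}^{n})$. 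The second piece sums to $\frac{M-m}{M+1}\cdot\frac{1}{2}\sum_{|r|\le m}\e^{\mathrm{i}r(\theta-A_{l}^{n})}=\frac{M-m}{M+1}D_{m}(\theta-A_{l}^{n})$, using the exponential form of $D_{m}$ from the definition.

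\textbf{Conclusion.} Multiplying each contribution by $1/x_{n}$ and summing over $l=1,\dotsc,x_{n}$ yields exactly the three sums in the statement. The identity holds for every $\theta\in\mathbb{R}$, since all objects involved are trigonometric polynomials.

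The only step requiring care is the coefficient splitting in the case $l>j$. Everything else is bookkeeping, together with the monotonicity $m_{l}^{n}>2m_{l-1}^{n}$, which is what separates the two cases.
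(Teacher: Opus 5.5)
Your proposal is correct and follows essentially the same route as the paper's proof. You truncate each shifted Fejér kernel at degree $m_{j}^{n}$ and keep the kernels with $l\le j$ intact; for $l>j$ you split the weight $1-\frac{|r|}{m_{l}^{n}+1}$ into $\frac{m_{j}^{n}+1}{m_{l}^{n}+1}\bigl(1-\frac{|r|}{m_{j}^{n}+1}\bigr)+\frac{m_{l}^{n}-m_{j}^{n}}{m_{l}^{n}+1}$, which produces exactly the Fejér and Dirichlet terms.
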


\begin{proof}[Lemma \ref{lemma:equalityPartialFourierSumFn}]
Since $f_{n}$ is a trigonometric polynomial, its $m_{j}^{n}$-th symmetric partial Fourier sum can be obtained by only keeping the monomial terms of $f_{n}$ that have degree smaller than or equal to $m_{j}^{n}$. Thus, we~have
\begin{flalign}
S_{m_{j}^{n}}(f_{n},\theta) &= \frac{1}{x_{n}}\left(\sum_{l=1}^{j}F_{m_{l}^{n}}(\theta-A_{l}^{n})+\sum_{l=j+1}^{x_{n}}\left(\frac{1}{2}\sum_{r=-m_{j}^{n}}^{m_{j}^{n}}\left(1-\frac{|r|}{m_{l}^{n}+1}\right)\e^{ir(\theta-A_{l}^{n})}\right)\right) \nonumber \\
&= \frac{1}{x_{n}}\left(\sum_{l=1}^{j}F_{m_{l}^{n}}(\theta-A_{l}^{n})+\sum_{l=j+1}^{x_{n}}\left(\frac{1}{2}+\sum_{r=1}^{m_{j}^{n}}\left(1-\frac{r}{m_{l}^{n}+1}\right)\cos(r(\theta-A_{l}^{n}))\right)\right)\hspace{-1.7pt}. \label{eq:equalitySmjFejer}
\end{flalign}
Together with the fact that $$1=\frac{m_{j}^{n}+1}{m_{l}^{n}+1}+\frac{m_{l}^{n}-m_{j}^{n}}{m_{l}^{n}+1},\,\text{and}~1-\frac{r}{m_{l}^{n}+1}=\frac{m_{j}^{n}+1}{m_{l}^{n}+1}\frac{m_{j}^{n}+1-r}{m_{j}^{n}+1}+\frac{m_{l}^{n}-m_{j}^{n}}{m_{l}^{n}+1},$$ equality (\ref{eq:equalitySmjFejer}) implies that
\begin{flalign*}
S_{m_{j}^{n}}(f_{n},\theta) &= \frac{1}{x_{n}}\left(\sum_{l=1}^{j}F_{m_{l}^{n}}(\theta-A_{l}^{n})+\sum_{l=j+1}^{x_{n}}\left(\frac{m_{j}^{n}+1}{m_{l}^{n}+1}F_{m_{j}^{n}}(\theta-A_{l}^{n})+\frac{m_{l}^{n}-m_{j}^{n}}{m_{l}^{n}+1}D_{m_{j}^{n}}(\theta-A_{l}^{n})\right)\right) \\
&= \frac{1}{x_{n}}\sum_{l=1}^{j}F_{m_{l}^{n}}(\theta-A_{l}^{n})+\frac{1}{x_{n}}\sum_{l=j+1}^{x_{n}}\frac{m_{j}^{n}+1}{m_{l}^{n}+1}F_{m_{j}^{n}}(\theta-A_{l}^{n})+\frac{1}{x_{n}}\sum_{l=j+1}^{x_{n}}\frac{m_{l}^{n}-m_{j}^{n}}{m_{l}^{n}+1}D_{m_{j}^{n}}(\theta-A_{l}^{n})
\end{flalign*}
which is what we wanted to show.
\end{proof}

\begin{lemma}
\label{lemma:upperBoundPartialFourierSumFn}
For all $\theta$ in the union of the $J_{i}^{n}$, $i=1,\dotsc,x_{n}-1$, and for all $j=1,\dotsc,x_{n}-1$, we have $$0 \le S_{m_{j}^{n}}(f_{n},\theta)-\frac{1}{x_{n}}\sum_{l=j+1}^{x_{n}}\frac{m_{l}^{n}-m_{j}^{n}}{m_{l}^{n}+1}D_{m_{j}^{n}}(\theta-A_{l}^{n}) \le C,$$ and $$0 \le S_{m_{j}^{n}}(f_{n},-\theta)-\frac{1}{x_{n}}\sum_{l=j+1}^{x_{n}}\frac{m_{l}^{n}-m_{j}^{n}}{m_{l}^{n}+1}D_{m_{j}^{n}}(\theta+A_{l}^{n}) \le C,$$ where $C$ has been defined in Lemma \ref{lemma:propertiesFejerKernel}.
\end{lemma}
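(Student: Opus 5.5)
The plan is to start from the exact decomposition of Lemma \ref{lemma:equalityPartialFourierSumFn}. For fixed $j$ and any $\theta$, it gives
$$S_{m_{j}^{n}}(f_{n},\theta)-\frac{1}{x_{n}}\sum_{l=j+1}^{x_{n}}\frac{m_{l}^{n}-m_{j}^{n}}{m_{l}^{n}+1}D_{m_{j}^{n}}(\theta-A_{l}^{n})=\frac{1}{x_{n}}\sum_{l=1}^{j}F_{m_{l}^{n}}(\theta-A_{l}^{n})+\frac{1}{x_{n}}\sum_{l=j+1}^{x_{n}}\frac{m_{j}^{n}+1}{m_{l}^{n}+1}F_{m_{j}^{n}}(\theta-A_{l}^{n}).$$
Both inequalities then reduce to bounding this remainder $R_{j}(\theta)$ from below by $0$ and from above by $C$.

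The lower bound is immediate. Every Fejér kernel is non-negative by Lemma \ref{lemma:propertiesFejerKernel}, and the weights $1/x_{n}$ and $(m_{j}^{n}+1)/(m_{l}^{n}+1)$ are positive, so $R_{j}(\theta)\ge 0$.

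For the upper bound, I take $\theta$ in some $J_{i}^{n}$ and invoke Lemma \ref{lemma:FejerKernelsUniformlyBounded}. That lemma bounds every $F_{m_{k}^{n}}(\theta-A_{l}^{n})$ by $C$, with $k=l$ in the first sum and $k=j$ in the second. For $l\ge j+1$ we have $m_{l}^{n}>m_{j}^{n}$, so $(m_{j}^{n}+1)/(m_{l}^{n}+1)\le 1$. Hence
$$R_{j}(\theta)\le \frac{1}{x_{n}}\left(jC+(x_{n}-j)C\right)=C.$$

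For the second inequality, I apply Lemma \ref{lemma:equalityPartialFourierSumFn} at the point $-\theta$, which it permits since it holds for all real arguments. Using that $D_{m}$ and $F_{m}$ are even, $D_{m_{j}^{n}}(-\theta-A_{l}^{n})=D_{m_{j}^{n}}(\theta+A_{l}^{n})$, and likewise each $F_{m_{k}^{n}}(-\theta-A_{l}^{n})$ equals $F_{m_{k}^{n}}(\theta+A_{l}^{n})$. The remainder becomes the same expression with $\theta+A_{l}^{n}$ in place of $\theta-A_{l}^{n}$. It is non-negative for the same reason as before, and it is at most $C$ by the second half of Lemma \ref{lemma:FejerKernelsUniformlyBounded}.

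There is no real obstacle here. The only points needing care are using the evenness of the kernels correctly at $-\theta$ and checking that the weights are at most $1$, which comes from the monotonicity $m_{l}^{n}>2m_{l-1}^{n}$.
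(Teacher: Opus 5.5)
Your proof is correct and follows the paper's own argument. You use the decomposition of Lemma \ref{lemma:equalityPartialFourierSumFn}, the non-negativity of the Fejér kernels for the lower bound, and Lemma \ref{lemma:FejerKernelsUniformlyBounded} together with weights at most $1$ for the upper bound. Your explicit handling of the $-\theta$ case, via the evenness of $D_{m}$ and $F_{m}$, simply spells out what the paper dismisses as ``the exact same reasoning.''
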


\begin{proof}[Lemma \ref{lemma:upperBoundPartialFourierSumFn}]
Put $\theta$ in the union of the $J_{i}^{n}$, $i=1,\dotsc,x_{n}-1$, and $j \in \left\{1,\dotsc,x_{n}-1\right\}$. Lemma \ref{lemma:equalityPartialFourierSumFn} tells us that
\begin{equation}
\label{eq:equalityPartialFourierSumFn}
S_{m_{j}^{n}}(f_{n},\theta)-\frac{1}{x_{n}}\sum_{l=j+1}^{x_{n}}\frac{m_{l}^{n}-m_{j}^{n}}{m_{l}^{n}+1}D_{m_{j}^{n}}(\theta-A_{l}^{n})=\frac{1}{x_{n}}\sum_{l=1}^{j}F_{m_{l}^{n}}(\theta-A_{l}^{n})+\frac{1}{x_{n}}\sum_{l=j+1}^{x_{n}}\frac{m_{j}^{n}+1}{m_{l}^{n}+1}F_{m_{j}^{n}}(\theta-A_{l}^{n}).
\end{equation}
The Fejér kernel being non-negative, the left-hand side of equality (\ref{eq:equalityPartialFourierSumFn}) is non-negative as well.

In addition, Lemmas \ref{lemma:FejerKernelsUniformlyBounded} and \ref{lemma:equalityPartialFourierSumFn} immediately yield $$S_{m_{j}^{n}}(f_{n},\theta) \le \frac{1}{x_{n}}\sum_{l=1}^{j}C+\frac{1}{x_{n}}\sum_{l=j+1}^{x_{n}}\frac{m_{j}^{n}+1}{m_{l}^{n}+1}C+\frac{1}{x_{n}}\sum_{l=j+1}^{x_{n}}\frac{m_{l}^{n}-m_{j}^{n}}{m_{l}^{n}+1}D_{m_{j}^{n}}(\theta-A_{l}^{n}).$$ Since for all $l=j+1,\dotsc,x_{n}$, we have $(m_{j}^{n}+1) / (m_{l}^{n}+1) < 1$ because $m_{l}^{n} > m_{j}^{n}$, it follows that
\begin{equation}
\label{eq:upperBoundPartialFourierSumFn}
S_{m_{j}^{n}}(f_{n},\theta) \le C\frac{j}{x_{n}}+C\frac{x_{n}-j}{x_{n}}+\frac{1}{x_{n}}\sum_{l=j+1}^{x_{n}}\frac{m_{l}^{n}-m_{j}^{n}}{m_{l}^{n}+1}D_{m_{j}^{n}}(\theta-A_{l}^{n}).
\end{equation}
Inequality (\ref{eq:upperBoundPartialFourierSumFn}), together with the fact that the left-hand side of equality (\ref{eq:equalityPartialFourierSumFn}) is non-negative, yields the desired result for $S_{m_{j}^{n}}(f_{n},\theta)$. We use the exact same reasoning for $S_{m_{j}^{n}}(f_{n},-\theta)$.
\end{proof}

\subsection{Proof of Theorem \ref{thm:existenceEvenFunctionFourierDiverges}}
\label{subsection:appendixEvenFourier}

\begin{proof}[Theorem \ref{thm:existenceEvenFunctionFourierDiverges}]
This proof follows the proof of Theorem 3.1 in Chapter VIII, \S{3} of \cite{zygmund2003}. We present it here for completeness, giving particular attention to the finer details.

We consider the sequences $(\varphi_{n})_{n \in \mathbb{N}}$, $(M_{n})_{n \in \mathbb{N}}$, and $(E_{n})_{n \in \mathbb{N}}$, provided by Lemma \ref{lemma:HardyTrigonometricPolynomials}. For all $n \in \mathbb{N}$, we denote $q_{n}$ the degree of the polynomial $\varphi_{n}$. From Lemma \ref{lemma:HardyTrigonometricPolynomials}, we have $q_{n} \ge 1$ for all $n \in \mathbb{N}$.

Since the sequence $(M_{n})_{n \in \mathbb{N}}$ diverges to infinity, we can find by induction an increasing sequence $(n_{k})_{k \ge 1}$ of integers such that for all $k \ge 1$, we have $$M_{n_{k}}^{-\frac{1}{2}} < \frac{1}{2^{k}}.$$ It follows that $$\sum_{k=1}^{\infty}M_{n_{k}}^{-\frac{1}{2}} < +\infty.$$ The polynomial function $\varphi_{n_{k}}-\frac{1}{2}$ has constant term 0. Since it is even with degree $q_{n_{k}}$, it has the form $$\varphi_{n_{k}}(\theta)-\frac{1}{2}=\sum_{l=1}^{q_{n_{k}}}a_{l}^{(n_{k})}\left(\e^{\mathrm{i}l\theta}+\e^{-\mathrm{i}l\theta}\right)\hspace{-1.7pt},$$ where $a_{1}^{(n_{k})},\dotsc,a_{q_{n_{k}}}^{(n_{k})}$ are the coefficients associated with the monomials of degree $1,\dotsc,q_{n_{k}}$, respectively (because $\varphi_{n_{k}}-\frac{1}{2}$ is even, they are also the coefficients associated with the monomials of degree $-1,\dotsc,-q_{n_{k}}$, respectively). Since $\varphi_{n_{k}}-\frac{1}{2}$ is of degree $q_{n_{k}}$, the coefficient $a_{q_{n_{k}}}^{(n_{k})}$ is different from zero.

By induction, define a sequence of odd integers $(c_{k})_{k \ge 1}$ such that for all $k \ge 1$, $q_{n_{k}}c_{k} < c_{k+1}$. We have $$\varphi_{n_{k}}(c_{k}\theta)-\frac{1}{2}=\sum_{l=1}^{q_{n_{k}}}a_{l}^{(n_{k})}\left(\e^{\mathrm{i}lc_{k}\theta}+\e^{-\mathrm{i}lc_{k}\theta}\right)\hspace{-1.7pt},\,\text{and}~\varphi_{n_{k+1}}(c_{k+1}\theta)-\frac{1}{2}=\sum_{l=1}^{q_{n_{k+1}}}a_{l}^{(n_{k+1})}\left(\e^{\mathrm{i}lc_{k+1}\theta}+\e^{-\mathrm{i}lc_{k+1}\theta}\right)\hspace{-1.7pt}.$$ Thus, the polynomial $\varphi_{n_{k}}(c_{k}\theta)-\frac{1}{2}$ has degree $q_{n_{k}}c_{k}$, while the non-zero monomial term with lowest degree of the polynomial $\varphi_{n_{k+1}}(c_{k+1}\theta)-\frac{1}{2}$ has degree at least $c_{k+1}$. Since $q_{n_{k}}c_{k} < c_{k+1}$, it follows that the degrees of the non-zero monomial terms of the polynomial $\varphi_{n_{k}}(c_{k}\theta)-\frac{1}{2}$ are all smaller than those of $\varphi_{n_{k+1}}(c_{k+1}\theta)-\frac{1}{2}$. In particular, the degrees of the polynomials $\varphi_{n_{k}}(c_{k}\theta)-\frac{1}{2}$, $k \ge 1$, do not overlap.

We then write
\begin{equation}
\label{eq:definitionFunctionfFourierDivergent}
f(\theta)=\sum_{k=1}^{\infty}\frac{\varphi_{n_{k}}(c_{k}\theta)}{M_{n_{k}}^{\frac{1}{2}}}.
\end{equation}
The function $f$ is well-defined from $\mathbb{R}$ to $[0,+\infty]$, since for all $\theta \in \mathbb{R}$, $f(\theta)$ can be written as an infinite sum of non-negative terms. Moreover, it is clear that $f$ is non-negative and even. In addition, the monotone convergence theorem yields $$\int_{-\pi}^{\pi}f(\theta)d\theta=\sum_{k=1}^{\infty}\frac{1}{M_{n_{k}}^{\frac{1}{2}}}\int_{-\pi}^{\pi}\varphi_{n_{k}}(c_{k}\theta)d\theta=\sum_{k=1}^{\infty}\frac{1}{M_{n_{k}}^{\frac{1}{2}}}\frac{1}{c_{k}}\int_{-c_{k}\pi}^{c_{k}\pi}\varphi_{n_{k}}(\theta)d\theta.$$ Since for all $k \ge 1$, the function $\varphi_{n_{k}}$ is $2\pi$-periodic, it follows that $$\int_{-\pi}^{\pi}f(\theta)d\theta=\sum_{k=1}^{\infty}\frac{1}{M_{n_{k}}^{\frac{1}{2}}}\frac{1}{c_{k}}\sum_{l=0}^{c_{k}-1}\int_{-c_{k}\pi+2l\pi}^{-c_{k}\pi+2(l+1)\pi}\varphi_{n_{k}}(\theta)d\theta=\sum_{k=1}^{\infty}\frac{1}{M_{n_{k}}^{\frac{1}{2}}}\int_{-\pi}^{\pi}\varphi_{n_{k}}(\theta)d\theta=\pi\sum_{k=1}^{\infty}\frac{1}{M_{n_{k}}^{\frac{1}{2}}} < +\infty.$$ Thus, $f$ is integrable on $\left[-\pi,\pi\right]$. In particular, it is finite almost everywhere, and the series in the right-hand side of (\ref{eq:definitionFunctionfFourierDivergent}) is (absolutely) convergent for $m$-almost all $\theta \in \left[-\pi,\pi\right]$. In addition, we have $$f(\theta)=\frac{1}{2}\sum_{k=1}^{\infty}\frac{1}{M_{n_{k}}^{\frac{1}{2}}}+\sum_{k=1}^{\infty}\frac{\varphi_{n_{k}}(c_{k}\theta)-\frac{1}{2}}{M_{n_{k}}^{\frac{1}{2}}}=\sum_{k=0}^{\infty}\psi_{k}(\theta),$$ where $$\psi_{k}(\theta)=
\begin{cases}
\vspace{5pt}
\displaystyle\frac{1}{2}\sum_{l=1}^{\infty}\frac{1}{M_{n_{l}}^{\frac{1}{2}}} & \text{if}~k=0, \\
\displaystyle\frac{\varphi_{n_{k}}(c_{k}\theta)-\frac{1}{2}}{M_{n_{k}}^{\frac{1}{2}}} & \text{if}~k \ge 1.
\end{cases}$$ Take $n \in \mathbb{Z}$. Notice that for all $K \in \mathbb{N}$, we have $$\left|\sum_{k=0}^{K}\psi_{k}(\theta)\e^{-\mathrm{i}n\theta}\right| \le \sum_{k=0}^{K}\left|\psi_{k}(\theta)\right| \le \frac{1}{2}\sum_{l=1}^{\infty}\frac{1}{M_{n_{l}}^{\frac{1}{2}}}+\sum_{k=1}^{K}\frac{\varphi_{n_{k}}(c_{k}\theta)+\frac{1}{2}}{M_{n_{k}}^{\frac{1}{2}}} \le \sum_{k=1}^{\infty}\frac{\varphi_{n_{k}}(c_{k}\theta)+1}{M_{n_{k}}^{\frac{1}{2}}}.$$ Denote $g$ the function defined on $[-\pi,\pi]$ such that for all $\theta \in [-\pi,\pi]$, $$g(\theta)=\sum_{k=1}^{\infty}\frac{\varphi_{n_{k}}(c_{k}\theta)+1}{M_{n_{k}}^{\frac{1}{2}}}.$$ We have $g$ non-negative, and $g$ is integrable by the monotone convergence theorem. Thus, the sequence $$\left(\theta \mapsto \sum_{k=0}^{K}\psi_{k}(\theta)\e^{-\mathrm{i}n\theta}\right)_{K \ge 0}$$ converges pointwise to the function $\theta \mapsto f(\theta)\e^{-\mathrm{i}n\theta}$, and it is dominated by the integrable function $g$. Hence, the dominated convergence theorem guarantees that the series $\sum_{k \in \mathbb{N}}\hat{\psi}_{k}(n)$ converges and that
\begin{equation}
\label{eq:FourierCoefOffAsSumOfFourierCoefs}
\sum_{k=0}^{\infty}\hat{\psi}_{k}(n)=\lim_{K \to +\infty}\frac{1}{2\pi}\int_{-\pi}^{\pi}\sum_{k=0}^{K}\psi_{k}(\theta)\e^{-\mathrm{i}n\theta}d\theta=\frac{1}{2\pi}\int_{-\pi}^{\pi}f(\theta)\e^{-\mathrm{i}n\theta}d\theta=\hat{f}(n).
\end{equation}
But the family $(\psi_{k})_{k \in \mathbb{N}}$ is a family of non-overlapping polynomials. Thus, for all $n \in \mathbb{Z}$, there is at most one polynomial among all the $(\psi_{k})_{k \in \mathbb{N}}$ whose $n$-th Fourier coefficient is non-zero. Combined with equality (\ref{eq:FourierCoefOffAsSumOfFourierCoefs}), this means that for all $n \in \mathbb{Z}$, either $\hat{f}(n)=0$ and $\hat{\psi}_{k}(n)=0$ for all $k \in \mathbb{N}$, either $\hat{f}(n) \ne 0$ and there is exactly one $k \in \mathbb{N}$ such that $\hat{\psi}_{k}(n) \ne 0$, and we have $\hat{f}(n)=\hat{\psi}_{k}(n)$. Since the functions $\psi_{k}$, $k \in \mathbb{N}$, are trigonometric polynomials, their Fourier coefficients coincide with their polynomial coefficients. It follows that the non-zero Fourier coefficients of $f$ are exactly the non-zero coefficients of the polynomials $\psi_{k}$, $k \in \mathbb{N}$, ordered so as to make their degrees match.

In addition, the non-overlapping is sequential, in the sense that for all $k \ge 1$, the degrees of the non-zero monomial terms of the polynomial $\psi_{k-1}$ are all smaller than those of $\psi_{k}$, while the degrees of the non-zero monomial terms of the polynomial $\psi_{k+1}$ are all larger than those of $\psi_{k}$. Hence, the Fourier series of $f$ can be obtained by formally writing out in full the successive polynomials $\psi_{k}$, $k \in \mathbb{N}$.

For all $k \ge 1$, define the set $\mathcal{E}_{k}$ by $$\mathcal{E}_{k}=\left\{\theta \in [-\pi,\pi]~\middle|~c_{k}\theta \in \bigcup_{l=0}^{c_{k}-1}\left(E_{n_{k}}+(2l-c_{k}+1)\pi\right)\right\} \subset \left[-\pi,\pi\right].$$ In other words, we have $$\mathcal{E}_{k}=\frac{1}{c_{k}}\bigcup_{l=0}^{c_{k}-1}\left(E_{n_{k}}+(2l-c_{k}+1)\pi\right)\hspace{-1.7pt}.$$ But the sets $\left(E_{n_{k}}+(2l-c_{k}+1)\pi\right)$, $l=0,\dotsc,c_{k}-1$, are clearly pairwise $m$-almost disjoint. We obtain $$m(\mathcal{E}_{k})=\frac{1}{c_{k}}\sum_{l=0}^{c_{k}-1}m(E_{n_{k}}+(2l-c_{k}+1)\pi)=\frac{1}{c_{k}}\sum_{l=0}^{c_{k}-1}m(E_{n_{k}})=m(E_{n_{k}}).$$ Since $m(E_{n}) \rightarrow 2\pi$ as $n \to +\infty$, and $n_{k} \rightarrow +\infty$ as $k \to +\infty$, we deduce that $m(\mathcal{E}_{k}) \rightarrow 2\pi$ as $k \to +\infty$.

Now, put $$\mathcal{E}=\bigcap_{j=1}^{\infty}\bigcup_{k=j}^{\infty}\mathcal{E}_{k} \subset \left[-\pi,\pi\right].$$ For all $j \ge 1$, we have $$m(\mathcal{E}_{k}) \le m\hspace{-1.6pt}\left(\bigcup_{k=j}^{\infty}\mathcal{E}_{k}\right) \le 2\pi,$$ for any $k \ge j$. Since $m(\mathcal{E}_{k}) \rightarrow 2\pi$, it follows that $$m\hspace{-1.6pt}\left(\bigcup_{k=j}^{\infty}\mathcal{E}_{k}\right)=2\pi,$$ for all $j \ge 1$. Thus, we obtain that $m(\mathcal{E})=2\pi$.

It remains to show that the Fourier series of $f$ diverges unboundedly at every point of $E$.

Take $\theta \in \mathcal{E}$. Then, $\theta$ belongs to infinitely many $\mathcal{E}_{k}$. For all $j \ge 1$, denote $k_{j}$ the smallest integer larger than or equal to $j$ such that $\theta \in \mathcal{E}_{k_{j}}$. The sequence $(k_{j})_{j \ge 1}$ is non-decreasing and diverges to infinity.

For all $j \ge 1$, we have $\theta \in \mathcal{E}_{k_{j}}$, so that there exists $l \in \left\{0,\dotsc,c_{k_{j}}-1\right\}$ such that $c_{k_{j}}\theta-(2l-c_{k_{j}}+1)\pi$ belongs to $E_{n_{k_{j}}}$. Consequently, there is an integer $p_{k_{j}}(\theta)$ such that $\max(1,n_{k_{j}}) \le p_{k_{j}}(\theta) \le q_{n_{k_{j}}}$, and
\begin{equation}
\label{eq:lowerBoundPartialFourierSumPhinkjModulo}
\left|S_{p_{k_{j}}(\theta)}(\varphi_{n_{k_{j}}},c_{k_{j}}\theta-(2l-c_{k_{j}}+1)\pi)\right| \ge M_{n_{k_{j}}}.
\end{equation}
But the $p_{k_{j}}(\theta)$-th symmetric partial Fourier sum associated with $\varphi_{n_{k_{j}}}$ is $2\pi$-periodic, and since $c_{k_{j}}$ is odd, it follows from inequality (\ref{eq:lowerBoundPartialFourierSumPhinkjModulo}) that
\begin{equation}
\label{eq:lowerBoundPartialFourierSumPhinkjClean}
\left|S_{p_{k_{j}}(\theta)}(\varphi_{n_{k_{j}}},c_{k_{j}}\theta)\right| \ge M_{n_{k_{j}}}.
\end{equation}
Additionally, since $S_{p_{k_{j}}(\theta)}(\varphi_{n_{k_{j}}},c_{k_{j}}\theta)$ is equal to the polynomial $\varphi_{n_{k_{j}}}$ truncated at degree $p_{k_{j}}(\theta)$ and evaluated at $c_{k_{j}}\theta$, it is easy to see that $M_{n_{k_{j}}}^{-1/2}\hspace{-1.6pt}\left(S_{p_{k_{j}}(\theta)}(\varphi_{n_{k_{j}}},c_{k_{j}}\theta)-\frac{1}{2}\right)$ is equal to the polynomial $\psi_{k_{j}}$ truncated at degree $c_{k_{j}}p_{k_{j}}(\theta)$ and evaluated at $\theta$. As a result of the above discussion regarding the Fourier series of $f$, $M_{n_{k_{j}}}^{-1/2}\hspace{-1.6pt}\left(S_{p_{k_{j}}(\theta)}(\varphi_{n_{k_{j}}},c_{k_{j}}\theta)-\frac{1}{2}\right)$ is the sum of the successive monomial terms of the Fourier series of $f$, evaluated at $\theta$, whose degrees are not smaller than $c_{k_{j}}$ and not larger than $c_{k_{j}}p_{k_{j}}(\theta)$. Inequality (\ref{eq:lowerBoundPartialFourierSumPhinkjClean}) implies that this sum is larger than or equal to $M_{n_{k_{j}}}^{-1/2}\hspace{-1.6pt}\left(M_{n_{k_{j}}}-\frac{1}{2}\right)$ in absolute value.

Hence, for all $j \ge 1$, the sum of the successive monomial terms of the Fourier series of $f$, evaluated at $\theta$, whose degrees are not smaller than $c_{k_{j}}$ and not larger than $c_{k_{j}}p_{k_{j}}(\theta)$, is larger than or equal in absolute value to a quantity which diverges to infinity as $j$ goes to $+\infty$. Indeed, we have $n_{k_{j}} \rightarrow +\infty$ as $j$ goes to $+\infty$ because $(n_{k})_{k \ge 1}$ is an increasing sequence of integers and $(k_{j})_{j \ge 1}$ is non-decreasing and diverges to infinity. It follows that $\left(M_{n_{k_{j}}}^{-1/2}\hspace{-1.6pt}\left(M_{n_{k_{j}}}-\frac{1}{2}\right)\right)_{j \ge 1}$ diverges to infinity as $j$ goes to $+\infty$.

In informal words, you can find finite connected blocks of terms of the Fourier series of $f$, evaluated at $\theta$, whose sums grow larger and larger in absolute value, so as to approach $+\infty$ in absolute value.

This implies that the Fourier series of $f$, evaluated at $\theta$, diverges unboundedly. To see this, assume that there is a constant $C > 0$ such that for all $n \in \mathbb{N}$, $\left|S_{n}(f,\theta)\right| \le C$. From what has just been said, there is a finite connected block of terms of the Fourier series of $f$, evaluated at $\theta$, whose sum is larger than $2C$ in absolute value. Say that the minimal degree of this block is $m$ and its maximal~degree~is~$M$.

Since the function $f$ is even, we thus have $$\left|\sum_{l=m}^{M}\hat{f}(l)\left(\e^{\mathrm{i}l\theta}+\e^{-\mathrm{i}l\theta}\right)\right| > 2C.$$ If $m=0$, then we get $\left|S_{M}(f,\theta)\right| > 2C$ and we already have a contradiction. If $m \ge 1$, we have $$\left|\left|S_{m-1}(f,\theta)\right|-\left|\sum_{l=m}^{M}\hat{f}(l)\left(\e^{\mathrm{i}l\theta}+\e^{-\mathrm{i}l\theta}\right)\right|\right| \le \left|S_{m-1}(f,\theta)+\sum_{l=m}^{M}\hat{f}(l)\left(\e^{\mathrm{i}l\theta}+\e^{-\mathrm{i}l\theta}\right)\right|=\left|S_{M}(f,\theta)\right| \le C,$$ from which follows that $$\left|S_{m-1}(f,\theta)\right| \ge \left|\sum_{l=m}^{M}\hat{f}(l)\left(\e^{\mathrm{i}l\theta}+\e^{-\mathrm{i}l\theta}\right)\right|-C > C,$$ which contradicts the fact that for all $n \in \mathbb{N}$, $\left|S_{n}(f,\theta)\right| \le C$. Thus, the Fourier series of $f$, evaluated at $\theta$, diverges unboundedly. The proof is now complete, given that $\mathcal{E} \subset \left[-\pi,\pi\right]$ and $m(\mathcal{E})=2\pi$.
\end{proof}

\end{document}